\newcommand{\f}{\frac}
\newcommand{\no}{\notag}
\newcommand{\bb}{\mathbb}
\newcommand{\cc}{\mathcal}
\newcommand{\ol}{\overline}
\newcommand{\ti}{\tilde}
\newcommand{\bs}{\backslash}
\newcommand{\al}{\alpha}
\newcommand{\ga}{\gamma}
\newcommand{\Om}{\Omega}
\newcommand{\te}{\theta}
\newcommand{\eps}{\varepsilon}
\newcommand{\pd}{\partial}
\newcommand{\supp}{\mathrm{supp}}
\newcommand{\ca}{\mathrm{Cap}}
\newcommand{\x}{{\mathbf x}}
\newcommand{\cf}{{\mathbf1}}
\newcommand{\UT}{{\cc U^{\cc T}}}
\newcommand{\UTR}{{\cc U^{\cc T,\cc R}}}
\renewcommand{\Re}{\mathrm{Re}}
\newtheorem{theorem}{Theorem}
\newtheorem{lemma}[theorem]{Lemma}
\newtheorem{corollary}[theorem]{Corollary}
\theoremstyle{definition}
\theoremstyle{remark}
\numberwithin{equation}{section}
\numberwithin{theorem}{section}
\begin{document}
\title{On the Widom factors for $L_p$ extremal polynomials}

\author{G\"{o}kalp Alpan}
\address{Department of Mathematics, Rice University, Houston, TX 77005, USA}
\email{alpan@rice.edu}

\author{Maxim Zinchenko}
\address{Department of Mathematics and Statistics, University of New Mexico, 311 Terrace Street NE, MSC01 1115, Albuquerque, NM 87106, USA}
\email{maxim@math.unm.edu}
\thanks{\footnotesize M.Z. is supported in part by Simons Foundation grant CGM-581256.}

\subjclass[2010]{Primary 41A17; Secondary 41A44, 42C05, 33C45}
\keywords{Widom factors, polynomial pre-images, equilibrium and reflectionless measures, $L_p$ extremal polynomials, orthogonal polynomials on a circular arc.}

\begin{abstract}
We continue our study of the Widom factors for $L_p(\mu)$ extremal polynomials initiated in \cite{AlpZin20}. In this work we characterize sets for which the lower bounds obtained in \cite{AlpZin20} are saturated, establish continuity of the Widom factors with respect to the measure $\mu$, and show that despite the lower bound $[W_{2,n}(\mu_K)]^2\ge 2S(\mu_K)$ for the equilibrium measure $\mu_K$ on a compact set $K\subset\bb R$ the general lower bound $[W_{p,n}(\mu)]^p\ge S(\mu)$ is optimal even for measures $d\mu=wd\mu_K$ with polynomial weights $w$ on $K\subset\bb R$. We also study pull-back measures under polynomial pre-images introduced in \cite{GerVAs88,PehSte00} and obtain invariance of the Widom factors for such measures. Lastly, we study in detail the Widom factors for orthogonal polynomials with respect to the equilibrium measure on a circular arc and, in particular, find their limit, infimum, and supremum and show that they are strictly monotone increasing with the degree and strictly monotone decreasing with the length of the arc.
\end{abstract}

\date{\today}
\maketitle
%%%%%%%%%%%%%%%%%%%%%%%%%%%%%%%%%%%%%%%%%%%%%%%%%%%%%%%%%%%%%%%%%%%%%

%%%%%%%%%%%%%%%%%%%%%%%%%%%%%%%%%%%%%%%%%%%%%%%%%%%%%%%%%%%%%%%%%%%%%%%
\section{Introduction}
%%%%%%%%%%%%%%%%%%%%%%%%%%%%%%%%%%%%%%%%%%%%%%%%%%%%%%%%%%%%%%%%%%%%%%%
Let $K$ be a compact subset of $\bb C$ which contains infinitely many points and $\mu$ be a finite (positive) Borel measure such that $\supp(\mu)=K$. For $0<p\le\infty$ and $n\in \bb N$, a monic polynomial $T_n$ is called an $L_p(\mu)$ extremal polynomial if
\begin{align}
\|T_n\|_{L_p(\mu)}= \inf_{P_n\in \Pi_n} \|P_n\|_{L_p(\mu)}
\end{align}
where $\Pi_n$ is the set of all monic polynomials of degree $n$, $\|P_n\|_{L_\infty (\mu)}=\|P_n\|_{K}:=\mathrm{sup}_{z\in K} |P_n(z)|$ and $\|P_n\|_{L_p(\mu)}=\left(\int|P_n(z)|^pd\mu(z)\right)^{1/p}$, for $0<p<\infty$. We set
\begin{align}
t_{p,n}(\mu) := \inf_{P_n\in \Pi_n} \|P_n\|_{L_p(\mu)}.
\end{align}
We remark that $n$-th $L_p(\mu)$ extremal polynomial is unique if $p\in (1,\infty]$ and not necessarily unique if $p\in (0,1]$. For $p=2$ the extremal polynomials are orthogonal polynomials and for $p=\infty$ they are called Chebyshev polynomials.

We need several concepts from potential theory to discuss Widom factors and refer the reader to  \cite{Ran95,ST97} for potential theoretic preliminaries. Let $\ca(K)$ denote the logarithmic capacity. If $\ca(K)>0$ then we denote the equilibrium measure of $K$ by $\mu_K$. In this case, $g_K$ denotes the Green function, that is,
\begin{align}\label{gK-muK}
g_K(z) = -\log\ca(K)+\int\log|z-\zeta|\,d\mu_K(\zeta), \quad z\in\bb C.
\end{align}
The outer domain $\Om_K$ is the unbounded component of $\ol{\bb C}\bs K$. Throughout, regularity of a set or a point means regularity with respect to the Dirichlet problem in $\Om_K.$

When $K$ is non-polar, Widom factors are defined by
\begin{align}\label{W-def}
W_{p,n}(\mu):= t_{p,n}(\mu)/\ca(K)^n.
\end{align}
For $p=\infty$, we use the notation $W_{\infty,n}(K)$ because in this case this quantity does not depend on the measure. For a fuller treatment of the results on Widom factors we refer the reader to \cite{Alp17, Alp19, AlpGon15, AlpZin20, And17, AndNaz18, Chr12, CSYZ19, CSZ11, CSZ17, CSZ3, CSZ4, Eic17, GonHat15, Sch08, Tot09, Tot11, Tot14, TotVar15, TotYud15, Wid69}.

Let $d\mu= w\,d\mu_K+d\mu_s$ be the Lebesgue decomposition of $\mu$ with respect to $\mu_K$. We introduce the exponential relative entropy of $\mu$ (relative to $\mu_K$) by
\begin{align} \label{S-def}
S(\mu) = S_K(w) := \exp\left[\int\log w(z)\, d\mu_K(z)\right].
\end{align}

It was recently proved that (first in \cite{Alp19} for $p=2$ then for $0<p<\infty$ in \cite{AlpZin20})
\begin{align}\label{LB-univ}
\big[W_{p,n}(\mu)\big]^p \ge S(\mu), \quad n\in\bb N, \; 0<p<\infty.
\end{align}
Although \eqref{LB-univ} is stated in \cite{AlpZin20} for unit measures, it is clear that $\big[W_{p,n}(\mu)\big]^p/S(\mu)$ remains unchanged after normalizing the measure so \eqref{LB-univ} is valid for finite measures.
It was also shown in \cite{AlpZin20} that for any $0<p<\infty$ and $n\in \bb N$,
\begin{align}\label{LB-sharp}
\inf_{\mu} \big[W_{p,n}(\mu)\big]^p/S(\mu) = 1,
\end{align}
where the infimum is taken over probability measures on ${[-2,2]}$.

Nonetheless, the improved lower bound
\begin{align}\label{LB-impr}
\big[W_{2,n}(\mu)\big]^2 \ge 2 S(\mu), \quad n\in\bb N,
\end{align}
holds in the following cases (see Sections 3-5 in \cite{AlpZin20}):
$\mu$ is the equilibrium measure of a non-polar compact subset of $\bb R$, $\mu$ is in the isospectral torus of a finite gap set, and $\mu$ is given by the Jacobi weight for a certain range of parameters.

A natural question that arises is what features of the weight $w$ are responsible for the doubling of the lower bound for $[W_{2,n}(w\mu_K)]^2$. We prove that when $K$ is a non-polar compact subset of $\bb R$, \eqref{LB-sharp} holds if the infimum is taken over all finite measures of the form $\mu=w\mu_K$ where $w$ is a polynomial weight positive on $K$. In particular, this shows that analyticity of the weight is not one of the features which leads to the improved lower bound \eqref{LB-impr}.

In the case of Chebyshev polynomials (i.e., $p=\infty$) the analogous lower bounds for the Widom factors of a non-polar compact set $K$ are well known. When $K\subset\bb C$ (see \cite[Theorem~5.5.4]{Ran95}),
\begin{align}\label{LB-cheb-C}
W_{\infty,n}(K) \ge 1, \quad n\in\bb N,
\end{align}
and when $K\subset\bb R$ (see \cite{Sch08}),
\begin{align}\label{LB-cheb-R}
W_{\infty,n}(K) \ge 2, \quad n\in\bb N.
\end{align}
In Theorem~1.2 and Theorem~1.1 in \cite{CSZ3}, for a fixed $n$, complete characterizations of the sets $K$ for which equality is attained in \eqref{LB-cheb-C} and \eqref{LB-cheb-R}, respectively, were given.

Note that for the equilibrium measure $\mu_K$ of a non-polar set $K$ we have $S(\mu_K)=1$ by \eqref{S-def} and hence
\begin{align}\label{LB-equil-C}
\big[W_{p,n}(\mu_K)\big]^p \ge 1, \quad n\in\bb N, \; 0<p<\infty,
\end{align}
for non-polar compact $K\subset\bb C$ by \eqref{LB-univ} and
\begin{align}\label{LB-equil-R}
\big[W_{2,n}(\mu_K)\big]^2 \ge 2, \quad n\in\bb N,
\end{align}
for non-polar compact $K\subset\bb R$ by \eqref{LB-impr}.

For a fixed $n$, we describe the sets $K$ for which equality is attained in \eqref{LB-equil-C} and \eqref{LB-equil-R}, respectively, provided that the set is regular. It turns out that $[W_{2,n}(\mu_K)]^2$ and $W_{\infty,n}(K) $ realize their respective theoretical lower bounds simultaneously.

Similarities between the asymptotics and bounds for $[W_{2,n}(\mu_K)]^2$ and $W_{\infty,n}(K) $ go well beyond this. See \cite[Corollary~1.5]{Alp19} for a recent result on boundedness from above. When $K$ is a $C^{2+}$ smooth Jordan curve,
$W_{\infty,n}(K) \rightarrow 1$ (Section~8, \cite{Wid69}). It follows from \eqref{LB-equil-C} and the fact that $W_{2,n}(\mu_K)\leq W_{\infty,n}(K)$ (since the $L_p$-norm with respect to a probability measure is non-decreasing in $p$) we also have $W_{2,n}(\mu_K)\rightarrow 1$ in this case.

When $K$ is a circular arc we evaluate $\lim_{n\rightarrow\infty} [W_{2,n}(\mu_K)]^2$ in \eqref{Arc-lim}. Comparing with the asymptotics of the Chebyshev polynomials on the circular arc $K$ \cite[Corollary~2.6]{Eic17} then shows that  $\lim_{n\to\infty} [W_{2,n}(\mu_K)]^2 = \lim_{n\to\infty} W_{\infty,n}(K)$.

Considering the above results, we raise the question on whether $[W_{2,n}(\mu_K)]^2$ and $W_{\infty,n}(K)$ have the same limit when $K$ is a smooth (say $C^{2+})$ Jordan arc. Note that a limit exists for $[W_{2,n}(\mu_K)]^2$ by \cite[Theorem~12.3.]{Wid69} but it is unclear whether there should be a limit for $W_{\infty,n}(K)$.

Given a finite Borel measure $\mu_0$ with compact support in $\bb C$, using a polynomial transformation it is possible to construct a new measure $\mu$ on the polynomial pre-image of $\supp(\mu_0)$ such that the extremal polynomials for $\mu$ and $\mu_0$ have many properties in common, see \cite{GerVAs88}, \cite{PehSte00}. We show invariance of relative entropy and Widom factors under a polynomial transformation. The novelty in our approach is to consider Lebesgue decomposition with respect to the equilibrium measure. We discuss Widom factors for reflectionless measures (see Section~\ref{Sec:Preimages} for the definition). We also derive $L_p({\mu})$ extremal polynomials for $1\leq p<\infty$ on the pre-image of the polynomial $\mathcal T(z)=z^N$ when $\supp(\mu_0)\subset \partial \bb D$.

The plan of the paper is as follows. In Section~\ref{Sec:Convergence}, we show the convergence $W_{p,n}(\mu_j)\rightarrow W_{p,n}(\mu)$ as $j\rightarrow\infty$ if  $\ca(\supp(\mu_j))\to\ca(\supp(\mu))$ and $\mu_j\rightarrow\mu$ in the weak star sense. Then we prove \eqref{LB-sharp} for polynomial weights. In Section~\ref{Sec:Preimages}, we derive several formulas for the extremal polynomials on polynomial pre-images. In Section~\ref{Sec:Saturation}, we investigate precisely when the equality holds in \eqref{LB-equil-C} and  \eqref{LB-equil-R}. In Section~\ref{Sec:Arc}, we give explicit formulas for $\inf_n [W_{2,n}(\mu_K)]^2$, $\sup_n [W_{2,n}(\mu_K)]^2$ and $\lim_{n\to\infty} [W_{2,n}(\mu_K)]^2$ when $K$ is a circular arc and show that the Widom factors $W_{2,n}(\mu_K)$ are strictly increasing with respect to $n$. In addition, for each $n$ fixed, we establish strict monotonicity of $W_{2,n}(\mu_K)$ and $\al_n(\mu_K)$ ($n$-th Verblunsky coefficient) with respect to the length of circular arc.

%%%%%%%%%%%%%%%%%%%%%%%%%%%%%%%%%%%%%%%%%%%%%%%%%%%%%%%%%%%%%%%%%%%%%%%
\section{Widom factors} \label{Sec:Convergence}
%%%%%%%%%%%%%%%%%%%%%%%%%%%%%%%%%%%%%%%%%%%%%%%%%%%%%%%%%%%%%%%%%%%%%%%

In this section we prove continuity of the Widom factors $W_{p,n}(\mu)$ with respect to the measure $\mu$ and as an application strengthen \eqref{LB-sharp} by showing that the lower bound \eqref{LB-univ} is sharp even for measures $d\mu=wd\mu_K$ on an arbitrary non-polar set $K\subset\bb R$ with analytic, in fact, polynomial weight $w$.

\begin{theorem}\label{Thm:mu-conv}
Let $\mu, \mu_j$, $j\in\bb N$, be finite Borel measures supported on a common bounded subset of $\bb C$ and such that $\mu_j\to\mu$ in the weak star sense. Then for any $p\in(0,\infty)$ and $n\in\bb N$,
\begin{align}\label{t-mu-conv}
\lim_{j\to\infty}t_{p,n}(\mu_j) = t_{p,n}(\mu).
\end{align}
In addition, if $\ca(\supp(\mu_j))\to\ca(\supp(\mu))$, in particular, if $\supp(\mu)=\supp(\mu_j)$ for all $j$, then
\begin{align}\label{W-mu-conv}
\lim_{j\to\infty}W_{p,n}(\mu_j) = W_{p,n}(\mu).
\end{align}
\end{theorem}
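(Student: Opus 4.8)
The plan is to establish \eqref{t-mu-conv} first, treating the convergence of the extremal values, and then deduce \eqref{W-mu-conv} by dividing by the capacity factor. The heart of the matter is showing that the map $\mu \mapsto t_{p,n}(\mu)$ is continuous with respect to weak star convergence. Since all the $\mu_j$ and $\mu$ are supported on a common bounded set $K_0 \subset \bb C$, I would fix a large disk $D$ containing $K_0$ and note that every monic polynomial of degree $n$ is uniformly bounded on $D$ in terms of its coefficients, while the extremal polynomials cannot have coefficients escaping to infinity (a monic polynomial of degree $n$ with a huge coefficient would have large $L_p$ norm relative to a fixed competitor such as a shifted monomial). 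This compactness is what keeps the problem well-posed.

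The technical engine is the continuity of integration against weak star convergent measures. For a fixed monic polynomial $P_n$, the function $z \mapsto |P_n(z)|^p$ is continuous and bounded on $K_0$ (here one uses $0 < p < \infty$, so $|P_n|^p$ is continuous even where $P_n$ vanishes), hence $\int |P_n|^p \, d\mu_j \to \int |P_n|^p \, d\mu$, giving $\|P_n\|_{L_p(\mu_j)} \to \|P_n\|_{L_p(\mu)}$. From this I would extract the easy inequality $\limsup_{j} t_{p,n}(\mu_j) \le t_{p,n}(\mu)$: taking $P_n$ to be a near-optimal polynomial for $\mu$, its norms under $\mu_j$ converge to its norm under $\mu$, which is within $\eps$ of $t_{p,n}(\mu)$. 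For the reverse inequality $\liminf_j t_{p,n}(\mu_j) \ge t_{p,n}(\mu)$, I would argue by contradiction: pass to a subsequence realizing the $\liminf$, select extremal polynomials $T_n^{(j)}$ for each $\mu_j$, use the coefficient compactness above to extract a further subsequence along which $T_n^{(j)} \to Q_n$ coefficientwise (hence uniformly on $K_0$) for some monic $Q_n \in \Pi_n$. Then one needs $\|T_n^{(j)}\|_{L_p(\mu_j)} \to \|Q_n\|_{L_p(\mu)}$; this requires the uniform convergence $|T_n^{(j)}|^p \to |Q_n|^p$ on $K_0$ together with weak star convergence, so one estimates $\big|\int |T_n^{(j)}|^p d\mu_j - \int |Q_n|^p d\mu\big|$ by splitting into $\int(|T_n^{(j)}|^p - |Q_n|^p)\,d\mu_j$, controlled by the uniform norm difference times the bounded total masses $\mu_j(K_0)$, and $\int |Q_n|^p (d\mu_j - d\mu)$, controlled by weak star convergence. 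Since each $T_n^{(j)}$ is extremal, $\|Q_n\|_{L_p(\mu)} = \lim_j t_{p,n}(\mu_j) \ge t_{p,n}(\mu)$ would already follow once we know the limit equals a genuine competitor's norm, and combining with the $\limsup$ bound yields \eqref{t-mu-conv}.

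The passage to \eqref{W-mu-conv} is then immediate from the definition \eqref{W-def}: since $K = \supp(\mu)$ is non-polar, $\ca(K) > 0$, and the hypothesis $\ca(\supp(\mu_j)) \to \ca(\supp(\mu))$ gives
\begin{align}
W_{p,n}(\mu_j) = \frac{t_{p,n}(\mu_j)}{\ca(\supp(\mu_j))^n} \longrightarrow \frac{t_{p,n}(\mu)}{\ca(\supp(\mu))^n} = W_{p,n}(\mu),
\end{align}
using \eqref{t-mu-conv} in the numerator and continuity of $x \mapsto x^n$ with a positive limit in the denominator.

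The main obstacle I anticipate is the subtlety for small $p$, specifically $0 < p < 1$, where $\|\cdot\|_{L_p}$ is only a quasi-norm and the triangle inequality fails, so one cannot simply estimate norm differences linearly. I would circumvent this by working throughout at the level of the integrals $\int |P_n|^p \, d\mu$ rather than the $p$-th roots, proving convergence of these integrals first and only taking the $p$-th root at the very end; the function $x \mapsto x^{1/p}$ is continuous on $[0,\infty)$ for every $p > 0$, so this postponement is harmless. A secondary point requiring care is the non-uniqueness of the extremal polynomial when $p \le 1$, but since my argument only needs to extract one convergent subsequence of extremal polynomials and identify the limit of the extremal \emph{values}, non-uniqueness causes no difficulty.
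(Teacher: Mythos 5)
Your proposal is correct and rests on the same two pillars as the paper's proof: the two‑sided comparison against competitors combined with weak star convergence (giving $\limsup_j t_{p,n}(\mu_j)\le t_{p,n}(\mu)$ immediately), and the finite‑dimensional compactness of $\Pi_n$ used to control the extremal polynomials uniformly in $j$. The only implementation difference is in the lower bound: you extract a coefficientwise convergent subsequence of extremal polynomials and identify its limit as a competitor for $\mu$, whereas the paper avoids subsequences by noting that weak star convergence $\int f\,d\mu_j\to\int f\,d\mu$ is automatically uniform over the compact family $\{|Q|^p:Q\in\Pi_n,\ \|Q\|_D\le M\}$, so that $\int|T_j|^p\,d\mu_j-\int|T_j|^p\,d\mu\to0$ directly; the two devices are interchangeable here, and your splitting of $\int|T_j|^p d\mu_j-\int|Q_n|^p d\mu$ (using the uniformly bounded masses $\mu_j(D)$, which indeed follow from testing against $f\equiv1$) is sound. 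The one spot to tighten is your parenthetical justification that the extremal polynomials have bounded coefficients: as literally stated, ``a monic polynomial with a huge coefficient has large $L_p(\mu_j)$ norm'' is false, since $\mu_j$ may concentrate near the zeros of such a polynomial; the relative version (large norm compared to a competitor) is true but requires an argument, e.g.\ factoring out a distant zero $z_0$ to get $\|P\|_{L_p(\mu_j)}\ge(|z_0|-R)\,t_{p,n-1}(\mu_j)$ versus $t_{p,n}(\mu_j)\le 2R\,t_{p,n-1}(\mu_j)$. The cleaner route, and the one the paper invokes, is that an $L_p(\mu_j)$ extremal polynomial may always be chosen with all zeros in the convex hull of $\supp(\mu_j)\subset D$ (projecting an outside zero onto the convex hull decreases $|P|$ pointwise on the support for every $p>0$), which bounds the coefficients outright. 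Your treatment of $0<p<1$ by working with the integrals and taking $p$‑th roots only at the end matches what the paper does implicitly and is correct.
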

\begin{proof}
Let $D\subset\bb C$ be a closed disc that supports all the measures. Then the weak star convergence $\mu_j\to\mu$ means $\int fd\mu_j\to\int fd\mu$ for any continuous function $f$ on $D$. By a compactness argument we also have $\int fd\mu_j\to\int fd\mu$ uniformly in $f$ on any compact set in the space of continuous functions on $D$. Since the subspace of polynomials of degree at most $n$ is finite dimensional, any closed bounded set in this subspace is compact and any continuous image of such a set is also compact. Thus, the set $S=\{|Q|^p:Q\in\Pi_n,\;\|Q\|_D\le M\}$ is compact and hence
\begin{align}\label{unif-conv}
\int fd\mu_j\to\int fd\mu \;\text{ uniformly in }\; f\in S.
\end{align}

Now let $T,T_j\in\Pi_n$ be extremal polynomials in $L_p(\mu)$ and $L_p(\mu_j)$ respectively, that is, $t_{p,n}(\mu)=\|T\|_{L_p(\mu)}$ and $t_{p,n}(\mu_j)=\|T_j\|_{L_p(\mu_j)}$. Since the zeros of each $T_j$ lie on $D$, the coefficients of $T_j$'s are uniformly bounded in $j$ and hence there is a number $M$ such that $\|T_j\|_D\le M$ for all $j$. Thus, by the uniform convergence \eqref{unif-conv}, we have $\int|T_j|^pd\mu_j\to\int|T|^pd\mu$ and hence \eqref{t-mu-conv} holds. Finally, \eqref{W-mu-conv} follows from \eqref{t-mu-conv} and \eqref{W-def}.
\end{proof}

\begin{theorem}\label{Thm:Inf-W/S}
For any non-polar compact set $K\subset\bb R$ and any $p\in(0,\infty)$, $n\in\bb N$,
\begin{align}\label{Inf-W/S}
\inf_{\mu}\big[W_{p,n}(\mu)\big]^p/S(\mu)=1,
\end{align}
where the infimum is taken over measures $\mu$ with polynomial densities with respect to the equilibrium measure $\mu_K$, that is, $d\mu(z)=w(z)d\mu_K(z)$ with a polynomial $w(z)$ positive on $K$.
\end{theorem}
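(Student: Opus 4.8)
The inequality ``$\ge 1$'' is free from \eqref{LB-univ}, since every admissible $\mu=w\mu_K$ satisfies $[W_{p,n}(\mu)]^p/S(\mu)\ge 1$; the plan is therefore to exhibit polynomial weights along which this ratio tends to $1$. To guess their shape I would revisit the proof of \eqref{LB-univ}, which is Jensen's inequality for the probability measure $\mu_K$: writing $t_{p,n}(\mu)^p=\int|T_n|^pw\,d\mu_K$ for the extremal $T_n$ and using $g_K\ge 0$ together with $\int\log|T_n|\,d\mu_K=n\log\ca(K)+\sum_k g_K(z_k)$, equality forces (i) all zeros $z_k$ of $T_n$ to lie where $g_K=0$ and (ii) $|T_n|^pw$ to be $\mu_K$-a.e.\ constant, i.e.\ $w=c/|T_n|^p$. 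Such a weight is never a polynomial and is typically unbounded, and bridging this gap is the crux of the argument.

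First I would fix $n$ distinct points $x_1,\dots,x_n\in K$ with $g_K(x_k)=0$ (available since $g_K=0$ quasi-everywhere on $K$), set $T_n^*(z)=\prod_{k=1}^n(z-x_k)$, and regularize the ideal weight by pushing its would-be poles off the real line: for $\de>0$ put
\[
V_\de(z)=\prod_{k=1}^n\big((z-x_k)^2+\de^2\big)^{p/2},\qquad w_\de=1/V_\de .
\]
On the compact real set $K$ the function $w_\de$ is continuous and bounded between two positive constants. Testing the extremal problem for $\mu_\de:=w_\de\mu_K$ with $T_n^*$ gives
\[
t_{p,n}(\mu_\de)^p\le\int\frac{|T_n^*|^p}{V_\de}\,d\mu_K
=\int\prod_{k=1}^n\Big(\frac{(z-x_k)^2}{(z-x_k)^2+\de^2}\Big)^{p/2}d\mu_K(z)\xrightarrow[\de\to0]{}1
\]
by dominated convergence, while $\ca(K)^{pn}S(\mu_\de)=\ca(K)^{pn}\exp\big(-\int\log V_\de\,d\mu_K\big)\to 1$ by monotone convergence together with $\int\log|z-x_k|\,d\mu_K=g_K(x_k)+\log\ca(K)=\log\ca(K)$. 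Combining these with the lower bound $[W_{p,n}(\mu_\de)]^p/S(\mu_\de)\ge 1$ would yield $\lim_{\de\to0}[W_{p,n}(\mu_\de)]^p/S(\mu_\de)=1$.

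It then remains to replace the non-polynomial weights $w_\de$ by polynomials, and this is exactly where Theorem~\ref{Thm:mu-conv} enters. For fixed $\de$, since $w_\de$ is continuous and bounded away from $0$ on $K\subset\bb R$, the Weierstrass theorem provides polynomials $w_{\de,j}\to w_\de$ uniformly on $K$ with $w_{\de,j}>0$ on $K$; then $\mu_{\de,j}:=w_{\de,j}\mu_K\to\mu_\de$ in the weak star sense with $\supp(\mu_{\de,j})=\supp(\mu_\de)=K$, so Theorem~\ref{Thm:mu-conv} gives $W_{p,n}(\mu_{\de,j})\to W_{p,n}(\mu_\de)$, while uniform convergence of $\log w_{\de,j}\to\log w_\de$ on $K$ gives $S(\mu_{\de,j})\to S(\mu_\de)$. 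Hence for each $\de$ the polynomial-weight ratios converge to $[W_{p,n}(\mu_\de)]^p/S(\mu_\de)$, and a diagonal choice $\de\to 0$ produces polynomial weights along which the ratio tends to $1$, establishing \eqref{Inf-W/S}. I expect the only genuine obstacle to be the singular nature of the equality-achieving weight $c/|T_n|^p$; the regularization $w_\de$ together with the continuity provided by Theorem~\ref{Thm:mu-conv} is precisely the device needed to overcome it.
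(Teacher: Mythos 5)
Your proof is correct and follows essentially the same route as the paper: regularize the ideal weight $c/|T_n|^p$ by moving its poles off the real axis, show the ratio tends to $1$ as the regularization parameter goes to $0$, and then pass to polynomial weights via Weierstrass approximation combined with Theorem~\ref{Thm:mu-conv}. The only (immaterial) differences are that the paper uses a single point $0\in K$ of multiplicity $n$ with the weight $(z^2+\eps^2)^{-np/2}$ and invokes regularity of that point to get $g_K(\pm i\eps)\to0$, whereas you use $n$ distinct points with $g_K(x_k)=0$ and monotone convergence of the logarithmic potential.
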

\begin{proof}
By shifting $K$ we may assume that $0\in K$ and that it is a regular point, that is, $g_K(z)\to0$ as $z\to0$. Consider $d\mu_\eps(z)=w_\eps(z)d\mu_K(z)$ with the weight $w_\eps(z)=(z^2+\eps^2)^{-np/2}$. Then, using $x^n$ as a trial polynomial, we obtain
\begin{align}
\big[t_{p,n}(\mu_\eps)\big]^p \le \int |x^n|^p w_\eps(x)d\mu_K(x) = \int \left|\frac{x^2}{x^2+\eps^2}\right|^{np/2} d\mu_K \le 1.
\end{align}
Using \eqref{gK-muK} and the fact that $0\in K$ is a regular point, we also get
\begin{align}
S(\mu_\eps)
&=\exp\left(-\frac{np}{2}\int\log\big(|z+i\eps||z-i\eps|\big)d\mu_K(z)\right)\no\\
&=\exp\left(-\frac{np}{2}\bigl(g_K(i\eps)+g_K(-i\eps)\bigr)\right)\ca(K)^{-np}
\to \ca(K)^{-np}
\end{align}
as $\eps\to0$. Thus,
\begin{align}\label{Inf-2}
\limsup_{\eps\to0}\big[W_{p,n}(\mu_\eps)\big]^p/S(\mu_\eps)\le1.
\end{align}

Next, for a fixed $\eps\in(0,1)$ we approximate $w_\eps$ by polynomials $w_j$ in the uniform norm on $K$. Since $w_\eps\ge c>0$ on $K$ we may assume $w_j>0$ on $K$ for each $j$ and
\begin{align}\label{Inf-UnifConv}
\Big\|1-\frac{w_j}{w_\eps}\Big\|_K \le \frac1c\|w_\eps-w_j\|_K\to0
\;\text{ as }\; j\to\infty.
\end{align}
Let $d\mu_j=w_jd\mu_K$, then $\mu_j\to\mu_\eps$ in the weak star sense and hence, by Theorem~\ref{Thm:mu-conv},
\begin{align}
\lim_{j\to\infty}W_{p,n}(\mu_j) = W_{p,n}(\mu_\eps)
\end{align}
and, by the uniform convergence \eqref{Inf-UnifConv},
\begin{align}
S(\mu_j)/S(\mu_\eps) = \exp\left[\int\log\frac{w_j(x)}{w_\eps(x)}d\mu_K\right] \to 1 \;\text{ as }\; j\to\infty.
\end{align}
Thus,
\begin{align}\label{Inf-3}
\lim_{j\to\infty}\big[W_{p,n}(\mu_j)\big]^p/S(\mu_j) = \big[W_{p,n}(\mu_\eps)\big]^p/S(\mu_\eps).
\end{align}
The theorem now follows from \eqref{LB-univ}, \eqref{Inf-2} and \eqref{Inf-3}.
\end{proof}

%%%%%%%%%%%%%%%%%%%%%%%%%%%%%%%%%%%%%%%%%%%%%%%%%%%%%%%%%%%%%%%%%%%%%%%
\section{Inverse polynomial images} \label{Sec:Preimages}
%%%%%%%%%%%%%%%%%%%%%%%%%%%%%%%%%%%%%%%%%%%%%%%%%%%%%%%%%%%%%%%%%%%%%%%

Throughout this section let $\cc T(z)=\tau z^N+\dots$, $\tau\neq0$, be a  polynomial of degree $N\geq1$ and $\mu_0$ be a finite Borel measure with compact support $K_0$ in $\bb C$. We will consider the polynomial pre-image set
\begin{align}\label{K-preim}
K:=\cc T^{-1}(K_0)
\end{align}
and a measure $\mu$ on $K$ defined via a certain pull-back procedure from $\mu_0$.
For this, let $\{\cc T_j^{-1}\}_{j=1}^N$ be a complete set of inverse branches of $\cc T$ and $\cc R(z)=\tau z^{N-1}+\dots$ be a polynomial of degree $N-1$ with the same leading coefficient as $\cc T$ and such that %$\cc R/\cc T'$ has no poles in $K$ and $\sgn(\cc R)=\sgn(\cc T')$ on $K$ where $\sgn(z)=z/|z|$. Thus
\begin{align}\label{TR-assumpt}
0<\cc R(z)/\cc T'(z)<\infty, \quad z\in K,
\end{align}
after possibly canceling the common zeros of $\cc R$ and $\cc T'$. For example, $\cc R=\cc T'/N$ satisfies these conditions.
Then (see (1.9) in \cite{PehSte00}) there exists a finite positive Borel measure $\mu$ with $\supp(\mu)=K$ such that
\begin{align}\label{mu-preim}
\int f(z)\,d\mu(z) = \sum_{j=1}^N\int  f(\cc T_j^{-1}(z))\,
\frac{\cc R(\cc T_j^{-1}(z))}{\cc T' (\cc T_j^{-1}(z))}\, d\mu_0(z),
\quad f\in C(K).
\end{align}
In fact, given $\cc T$, $\cc R$, and $\mu_0$ \eqref{mu-preim} uniquely determines $\mu$ and we define the linear map $\UTR(\mu_0):=\mu$. The identity \eqref{mu-preim} extends to all $f\in L_1(\mu)$. If $\cc R$ satisfies the above assumptions for a set $K_0$ then it automatically satisfies them for any subsets $K_1\subset K_0$ since $\cc T^{-1}(K_1)\subset\cc T^{-1}(K_0)$. Thus, in fact, the transformation $\UTR$ is well defined for all measures supported on $K_0$.
When $\cc R=\cc T'/N$ we will use the simplified notation $\UT:=\UTR$. By Lemma~4 in \cite{PehSte00}, the transformation $\UT$ maps the equilibrium measure on $K_0$ into the equilibrium measure on $K$,
\begin{align}\label{equil-preim}
\UT(\mu_{K_0})=\mu_{K}.
\end{align}

\begin{lemma}
Under the above assumptions on $\cc T$ and $\cc R$ we have for all $z\in K_0$,
\begin{align}\label{magic}
\sum_{j=1}^N \frac{\cc R(\cc T_j^{-1}(z))}{\cc T' (\cc T_j^{-1}(z))}=1.
\end{align}
Hence, the transformation $\mu=\UTR(\mu_0)$ preserves the total mass of $\mu_0$, that is, $\mu(K)=\mu_0(K_0)$.
\end{lemma}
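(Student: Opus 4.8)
The plan is to prove the pointwise identity \eqref{magic} by a partial fraction / residue computation in the variable of the inverse branches, and then to deduce the mass preservation by substituting the constant function into the defining identity \eqref{mu-preim}.

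First I would fix $z\in K_0$ and recall that the $N$ inverse-branch values $\zeta_j:=\cc T_j^{-1}(z)$ are precisely the roots, counted with multiplicity, of the degree-$N$ polynomial $\zeta\mapsto\cc T(\zeta)-z$. Since $\cc T$ has leading coefficient $\tau$, this yields the factorization $\cc T(\zeta)-z=\tau\prod_{j=1}^N(\zeta-\zeta_j)$. For a generic value of $z$, i.e.\ one outside the finite set of critical values of $\cc T$, the roots $\zeta_j$ are distinct, and I would consider the rational function $F(\zeta)=\cc R(\zeta)/(\cc T(\zeta)-z)$. Because $\deg\cc R=N-1<N=\deg(\cc T-z)$, this is a proper rational function whose only poles are the simple poles at $\zeta_j$, so its partial fraction expansion is
\[
F(\zeta)=\sum_{j=1}^N\frac{\cc R(\zeta_j)/\cc T'(\zeta_j)}{\zeta-\zeta_j},
\]
the residue at $\zeta_j$ being $\cc R(\zeta_j)/\cc T'(\zeta_j)$ since $(\cc T-z)'=\cc T'$.

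The key step is to compare the coefficient of $1/\zeta$ as $\zeta\to\infty$ on both sides. On the right it is $\sum_{j=1}^N\cc R(\zeta_j)/\cc T'(\zeta_j)$, while on the left, using that $\cc R$ and $\cc T$ share the leading coefficient $\tau$, one has $F(\zeta)=\tau\zeta^{N-1}/(\tau\zeta^N+\dots)\sim 1/\zeta$. Matching the two gives exactly $\sum_{j=1}^N\cc R(\cc T_j^{-1}(z))/\cc T'(\cc T_j^{-1}(z))=1$, which is \eqref{magic}. (Equivalently, the sum of the finite residues of $F$ equals minus its residue at infinity, namely $1$.) The main obstacle I expect is the treatment of the finitely many critical values of $\cc T$, where two or more branches collide and $\cc T'(\zeta_j)=0$. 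Here I would invoke the hypothesis \eqref{TR-assumpt}: after canceling the common zeros of $\cc R$ and $\cc T'$, the ratio $\cc R/\cc T'$ remains finite and positive on $K$, so each summand in \eqref{magic} extends continuously across these points. Since the left-hand side is a symmetric function of the roots, hence a single continuous function of $z$ that equals $1$ on the dense complement of the critical set, it must equal $1$ on all of $K_0$.

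Finally, mass preservation follows at once: taking $f\equiv1$ (a continuous, hence $L_1(\mu)$, function) in the defining identity \eqref{mu-preim} and applying \eqref{magic} inside the integral gives
\[
\mu(K)=\int\Bigl(\sum_{j=1}^N\frac{\cc R(\cc T_j^{-1}(z))}{\cc T'(\cc T_j^{-1}(z))}\Bigr)d\mu_0(z)=\int 1\,d\mu_0(z)=\mu_0(K_0),
\]
which is the asserted conclusion.
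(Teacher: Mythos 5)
Your proof is correct and follows essentially the same route as the paper: a partial fraction decomposition of $\cc R(\zeta)/(\cc T(\zeta)-z)$, comparison of the $1/\zeta$ coefficients at infinity using the matching leading coefficients, and then the choice $f\equiv1$ in \eqref{mu-preim}. Your extra continuity argument at the critical values is a welcome clarification of a point the paper handles only implicitly via the assumption that $\cc R/\cc T'$ has no poles on $K$.
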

\begin{proof}
Fix $z\in K_0$ and consider the partial fraction decomposition
\begin{align}
\frac{\cc R(y)}{\cc T(y)-z} = \sum_{j=1}^N \frac{A(\cc T_j^{-1}(z))}{y-\cc T_j^{-1}(z)}, \quad y\in\bb C\bs K,
\end{align}
where $A=\cc R/\cc T'$ which by assumption has no poles on $K$. Then letting $y\to\infty$ and comparing the $1/y$ terms yields \eqref{magic}. Taking $f\equiv 1$ in \eqref{mu-preim} and using \eqref{magic} implies $\mu(K)=\mu_0(K_0)$.
\end{proof}

\begin{theorem}\label{Thm:compo}
Let $d\mu_0=w\,d\mu_{K_0}+d \mu_{0,s}$ be the Lebesgue decomposition of $\mu_0$ with respect to $\mu_{K_0}$, that is, $w$ is the Radon--Nikodym derivative of $\mu_0$ with respect to $\mu_{K_0}$ and $\mu_{0,s}$ is the singular part of $\mu_0$ with respect to $\mu_{K_0}$. Then:
\begin{enumerate}[\rm (i)]
    \item $\mu=\UTR(\mu_0)$ has the following Lebesgue decomposition with respect to $\mu_K$,
    \begin{align}\label{compo}
    d\mu=\frac{N\mathcal{R}}{\mathcal{T}^\prime}\,w\circ\cc T\,d\mu_{K}+d\mu_{s}
    \end{align}
    where $\mu_{s}$ is the singular part of $\mu$ with respect to $\mu_{K}$.
    In addition, $\UTR$ maps absolutely continuous, singular continuous,
    pure point parts of $\mu_0$ to absolutely continuous, singular continuous, pure point parts of $\mu$ respectively.
    In particular,
    \begin{align}%\label{compo1}
    \UT(\mu_0)=w\circ\cc T\,d\mu_{K}+d\mu_{s}.
    \end{align}
    \item If $\mu=\UTR(\mu_0)$ then $S(\mu)=S_K(N\cc R/\cc T')S(\mu_0)$. In particular, if $\mu=\UT(\mu_0)$ then $S(\mu)=S(\mu_0)$.
\end{enumerate}
\end{theorem}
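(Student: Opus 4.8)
The plan is to reduce the general transformation $\UTR$ to the special one $\UT$ and then convert everything into the base space $K_0$ through the change of variables encoded in \eqref{equil-preim}. Writing $\rho:=N\cc R/\cc T'$, which by \eqref{TR-assumpt} and compactness of $K$ is continuous and bounded between two positive constants on $K$, I would first observe that $\UTR$ is just $\UT$ followed by multiplication by $\rho$. Indeed, splitting $\cc R/\cc T'=\rho/N$ in \eqref{mu-preim} and applying the definition of $\UT$ to the test function $f\rho$ gives
\begin{align}
\int f\,d\UTR(\mu_0)=\frac1N\sum_{j=1}^N\int (f\rho)(\cc T_j^{-1}(z))\,d\mu_0(z)=\int f\rho\,d\UT(\mu_0),\no
\end{align}
so $d\UTR(\mu_0)=\rho\,d\UT(\mu_0)$. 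Because multiplication by the positive bounded function $\rho$ sends a measure absolutely continuous (resp. singular) with respect to $\mu_K$ to one of the same type, it suffices to prove \eqref{compo} for $\UT$ and then multiply by $\rho$; this also accounts for the extra factor $N\cc R/\cc T'$ in the density.

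For $\UT$ I would use linearity, $\UT(\mu_0)=\UT(w\mu_{K_0})+\UT(\mu_{0,s})$, and treat the two summands separately. The absolutely continuous part is handled by the identity
\begin{align}
\int (g\circ\cc T)\,d\mu_K=\int g\,d\mu_{K_0},\no
\end{align}
which is immediate from \eqref{equil-preim} and $\cc T(\cc T_j^{-1}(z))=z$. Applying \eqref{equil-preim} in the same way to the test function $f\,(w\circ\cc T)$ yields $\int f\,(w\circ\cc T)\,d\mu_K=\int f\,d\UT(w\mu_{K_0})$, i.e. $\UT(w\mu_{K_0})=(w\circ\cc T)\,d\mu_K$, which is absolutely continuous with the asserted density. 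For the singular part, let $E_0$ be a Borel carrier of $\mu_{0,s}$ with $\mu_{K_0}(E_0)=0$ and set $E:=\cc T^{-1}(E_0)$. Since $\mathbf 1_E\circ\cc T_j^{-1}=\mathbf 1_{E_0}$, formula \eqref{equil-preim} gives $\mu_K(E)=\mu_{K_0}(E_0)=0$, and the same indicator computation shows $\UT(\mu_{0,s})$ is carried by $E$; hence $\UT(\mu_{0,s})\perp\mu_K$. Uniqueness of the Lebesgue decomposition then identifies $\mu_s=\UT(\mu_{0,s})$ and yields \eqref{compo}.

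The finer statement, that $\UTR$ respects the decomposition into absolutely continuous, singular continuous, and pure point parts, I would prove by tracking the local action of $\cc T$. From \eqref{mu-preim}, a point mass of $\mu_0$ at $z_0$ produces point masses of $\mu$ at the $N$ preimages $\cc T_j^{-1}(z_0)$, while the mass of $\mu$ at any $x_0\in K$ is a positive multiple of $\mu_0(\{\cc T(x_0)\})$; thus pure point maps to pure point and continuous to continuous. For the absolutely continuous versus singular continuous dichotomy (with respect to Lebesgue, resp. arc-length, measure on the supports) the key point is that $\cc T$ is locally bi-analytic off the finite zero set of $\cc T'$, so $\cc T$ and each branch $\cc T_j^{-1}$ map null sets to null sets; the finitely many critical points carry no Lebesgue mass and fall into the singular part. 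I expect this critical-point bookkeeping to be the main technical obstacle, whereas the core decomposition \eqref{compo} is a clean consequence of \eqref{equil-preim}.

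Finally, part (ii) follows from (i) by a direct entropy computation. By \eqref{compo} the Radon--Nikodym derivative of $\mu$ with respect to $\mu_K$ is $\rho\,(w\circ\cc T)$, so by \eqref{S-def},
\begin{align}
S(\mu)=\exp\!\Big[\int\log\rho\,d\mu_K\Big]\,\exp\!\Big[\int\log(w\circ\cc T)\,d\mu_K\Big]=S_K(N\cc R/\cc T')\,\exp\!\Big[\int\log w\,d\mu_{K_0}\Big],\no
\end{align}
where in the second factor I used the change of variables identity with $g=\log w$. Since the last exponential equals $S(\mu_0)$, this is $S(\mu)=S_K(N\cc R/\cc T')S(\mu_0)$; for $\UT$ one has $N\cc R/\cc T'\equiv1$, so $S_K(1)=1$ and $S(\mu)=S(\mu_0)$.
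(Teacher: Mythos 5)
Your proposal is correct and follows essentially the same route as the paper: identify the absolutely continuous part by applying \eqref{mu-preim} together with $\UT(\mu_{K_0})=\mu_{K}$ to a suitable test function, show that the image of the singular part stays singular, and change variables in the entropy integral. Your factorization $d\,\UTR(\mu_0)=\rho\,d\,\UT(\mu_0)$ with $\rho=N\cc R/\cc T'$ (legitimate by \eqref{TR-assumpt} and compactness) is a clean way to organize the reduction, and your treatment of the singular part via the common null carrier $E=\cc T^{-1}(E_0)$ is a slightly more direct variant of the paper's argument, which instead shows that the Radon--Nikodym derivative of $\UTR(\mu_{0,s})$ with respect to $\mu_K$ vanishes $\mu_K$-a.e. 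Both arguments rest on the same identity $\mu_K(\cc T^{-1}(A))=\mu_{K_0}(A)$.

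Two caveats. First, in discussing the singular continuous part you shift to a decomposition with respect to Lebesgue/arc-length measure and anticipate ``critical-point bookkeeping'' for the zeros of $\cc T'$; this is not what the theorem asserts. All decompositions here are taken with respect to the equilibrium measures $\mu_{K_0}$ and $\mu_K$, so the claim that singular continuous maps to singular continuous follows immediately from facts you have already established: singular maps to singular (your carrier argument) and atomless maps to atomless (your point-mass computation). No analysis of the critical points of $\cc T$ is needed, and the step you flag as ``the main technical obstacle'' is in fact vacuous. Second, in part (ii) you invoke the change-of-variables identity with $g=\log w$, but that identity is only available for continuous functions and their $L_1$ extension, while $\log w$ is in general neither continuous nor integrable (the integral may equal $-\infty$, in which case $S(\mu_0)=0$). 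The paper closes this gap by applying the identity to $\log(w+\eps)$, which lies in $L_1(\mu_{K_0})$ since $\log(w+\eps)\le w+\eps$ and is bounded below by $\log\eps$, and then letting $\eps\downarrow0$ via monotone convergence; your argument needs the same regularization. Neither issue is fatal, but both should be repaired before the proof is complete.
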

\begin{proof}
(i) First, we assume that $\mu_{0,s}=0$, that is, $d \mu_0= w d\mu_{K_0}$.

Since $\UT(\mu_{K_0})=\mu_{K}$, for any $g\in C(K)$,
\begin{align}%\label{invii}
\int g(z)\,\frac{\mathcal{R}(z)}{\mathcal{T}^\prime (z)} w(\cc T(z))\,d\mu_{K}(z)= \int \frac1N \sum_{j=1}^N  \left(g(\mathcal{T}_j^{-1}(z)) \frac{\mathcal{R}(\mathcal{T}_j^{-1}(z))}{\mathcal{T}^\prime (\mathcal{T}_j^{-1}(z))}\right) w(z)\,d\mu_{K_0}(z).
\end{align}
is satisfied in view of \eqref{mu-preim} and \eqref{equil-preim}. This implies that for the measure $d\nu(z)=\frac{N\mathcal{R}(z)}{\mathcal{T}^\prime(z)}w(\cc T(z)) \, d\mu_{K}(z)$ and $g\in L_1(\nu)$,
\begin{align}%\label{invii2}
\int g(z)\,d\nu(z)= \int  \sum_{j=1}^N  \left(g(\mathcal{T}_j^{-1}(z)) \frac{\mathcal{R}(\mathcal{T}_j^{-1}(z))}{\mathcal{T}^\prime (\mathcal{T}_j^{-1}(z))}\right) w(z)\,d\mu_{K_0}(z).
\end{align}
Thus, $d\mu(z)=\frac{N\mathcal{R}(z)}{\mathcal{T}^\prime(z)}w(\cc T)d\mu_K(z)$.

Now, we assume that $\mu_{0,s}\neq 0$. Clearly $\UTR(d \mu_0)= \UTR(w d\mu_{K_0})+\UTR(d \mu_{s,0})$. Let $\nu_1:= \UTR(d \mu_{s,0})$
and $d\nu_1= h d\mu_{K}+d\nu_{1,s}$. Let $A$ be a Borel subset of $K_0$ such that $\mu_{K_0}(A)=1$ and $\mu_{s,0}(A)=0$. Then $\nu_1(\mathcal{T}^{-1}(A))=0$. This implies that $h=0$ $\mu_{K}$-a.e since
\begin{align}
0 = \int \cf_{\mathcal{T}^{-1}(A)}\, d\nu_1
\ge \int \cf_{\mathcal{T}^{-1}(A)}\, h\, d\mu_{K}
& = \int \frac1N \sum_{j=1}^N h(\mathcal{T}_j^{-1}(z))\, d\mu_{K_0}(z)
= \int h\, d\mu_{K}.
\end{align}
Hence, this proves \eqref{compo}. In particular \eqref{compo} implies that $\UTR$ maps absolutely continuous and singular parts of $\mu_0$ to absolutely continuous and singular parts of $\mu$ respectively. It follows from \eqref{mu-preim} that $\UTR$ maps singular continuous and pure point parts of $\mu_0$ to singular continuous, and pure point parts of $\mu$ respectively.\\

(ii) Let $\eps>0$. Since $\log(w(\cc T)+\eps)\in L_1(\mu_{K})$, it follows from \eqref{mu-preim}, \eqref{equil-preim} that
\begin{align}
\int \log(w(\cc T(z))+\eps)\, d\mu_{K}(z)= \int \log(w(z)+\eps)\, d\mu_{K_0}(z).
\end{align}
Letting $\eps\downarrow 0$ and using a simple argument involving the monotone convergence theorem we get
\begin{align}\label{super}
\int \log(w(\cc T(z)))\, d\mu_{K}(z)= \int \log(w(z))\, d\mu_{K_0}(z).
\end{align}
Thus by \eqref{compo} and \eqref{super} we get  $S(\mu)=S_K(N\cc R/\cc T')S(\mu_0)$. In particular, when $\mu=\UT(\mu_0)$ we have $S(\mu)=S(\mu_0)$.
\end{proof}

In the following we will assume that $K_0$ and hence also $K=\cc T^{-1}(K_0)$ have positive capacity. The next result shows that the Widom factors are invariant under the transformation $\UTR$.

\begin{theorem}\label{Thm:W-invar}
Let $\cc T(z)=\tau z^N+\dots$ be a degree $N\geq1$ polynomial, $p\in [1,\infty)$, $\mu_0$ be a finite Borel measure, and $\{T_n\}_{n=1}^\infty$ be monic extremal polynomials in $L_p(\mu_0)$ with $\deg(T_n)=n$. Then
\begin{align} \label{EP-invar}
S_{nN}(z)=\tau^{-n}\,T_n(\cc T(z)), \quad n\in\bb N,
\end{align}
are monic extremal polynomials in $L_p(\mu)$ for $\mu=\UTR(\mu_0)$
and the corresponding Widom factors satisfy
\begin{align}\label{W-invar}
W_{p,nN}(\mu) = W_{p,n}(\mu_0), \quad n\in\bb N.
\end{align}
\end{theorem}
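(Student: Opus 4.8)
The plan is to prove the equivalent statement $t_{p,nN}(\mu)=|\tau|^{-n}t_{p,n}(\mu_0)$ together with the fact that $S_{nN}$ attains this infimum, and then to read off \eqref{W-invar} from the behavior of capacity under pre-images. The first ingredient is a pull-back norm identity: applying \eqref{mu-preim} to $f=|Q\circ\cc T|^p$ for an arbitrary polynomial $Q$ of degree $n$, using $\cc T(\cc T_j^{-1}(z))=z$ and \eqref{magic}, gives
\begin{align*}
\big\|\tau^{-n}\,Q\circ\cc T\big\|_{L_p(\mu)}^p = |\tau|^{-np}\int_{K_0}|Q|^p\,d\mu_0 = |\tau|^{-np}\,\|Q\|_{L_p(\mu_0)}^p .
\end{align*}
Since $S_{nN}=\tau^{-n}\,T_n\circ\cc T$ is monic of degree $nN$, taking $Q=T_n$ shows $\|S_{nN}\|_{L_p(\mu)}=|\tau|^{-n}t_{p,n}(\mu_0)$, and in particular $t_{p,nN}(\mu)\le|\tau|^{-n}t_{p,n}(\mu_0)$.

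The reverse inequality is the heart of the matter, and I would obtain it from a fiber-averaging operator
\begin{align*}
(Ef)(x):=\sum_{j=1}^N \frac{\cc R(\cc T_j^{-1}(\cc T(x)))}{\cc T'(\cc T_j^{-1}(\cc T(x)))}\, f\big(\cc T_j^{-1}(\cc T(x))\big),
\end{align*}
which produces a $\cc T$-invariant function. Three properties drive the argument: (a) $E$ fixes every function of the form $Q\circ\cc T$, by \eqref{magic}; (b) $\int_K Ef\,d\mu=\int_K f\,d\mu$, again via \eqref{mu-preim} and \eqref{magic}; and (c) since the fiber weights $\cc R(\cc T_j^{-1})/\cc T'(\cc T_j^{-1})$ are positive on $K$ by \eqref{TR-assumpt} and sum to $1$, Jensen's inequality (valid precisely because $p\ge1$) yields $|Ef|^p\le E(|f|^p)$ pointwise, so that integrating and using (b) gives the contraction $\|Ef\|_{L_p(\mu)}\le\|f\|_{L_p(\mu)}$.

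The key structural step is to show that for a monic $P$ of degree $nN$ one has $EP=\tau^{-n}Q\circ\cc T$ with $Q$ monic of degree $n$. Writing $P=\tau^{-n}\cc T^{\,n}+P_1$ with $\deg P_1\le nN-1$ and using (a) on the leading term, it suffices to check that $E$ maps polynomials of degree $\le nN-1$ into polynomials in $\cc T$ of degree $\le(n-1)N$ in $x$. This is the point I expect to be the main obstacle: one must verify that, for the roots $x_j=\cc T_j^{-1}(z)$ of $\cc T(x)=z$, the symmetric rational function $z\mapsto\sum_j \cc R(x_j)\,x_j^{\,m}/\cc T'(x_j)$ is in fact a polynomial in $z$ of degree $\lfloor m/N\rfloor$. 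I would establish this from a residue-at-infinity computation (the residues of $\cc R(x)x^m/(\cc T(x)-z)$ sum to zero), which simultaneously proves polynomiality and controls the degree. Granting this, $\|P\|_{L_p(\mu)}\ge\|EP\|_{L_p(\mu)}=|\tau|^{-n}\|Q\|_{L_p(\mu_0)}\ge|\tau|^{-n}t_{p,n}(\mu_0)$; taking the infimum over monic $P$ gives the reverse inequality, hence $t_{p,nN}(\mu)=|\tau|^{-n}t_{p,n}(\mu_0)$ and the extremality of $S_{nN}$, proving \eqref{EP-invar}.

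Finally, for \eqref{W-invar} I would invoke the standard capacity identity for pre-images, $\ca(K)=(\ca(K_0)/|\tau|)^{1/N}$, which follows from $g_K=\tfrac{1}{N}\,g_{K_0}\circ\cc T$ by matching Robin constants at $\infty$. Then $\ca(K)^{nN}=\ca(K_0)^n/|\tau|^n$, so that
\begin{align*}
W_{p,nN}(\mu)=\frac{t_{p,nN}(\mu)}{\ca(K)^{nN}}=\frac{|\tau|^{-n}t_{p,n}(\mu_0)}{\ca(K_0)^n/|\tau|^n}=\frac{t_{p,n}(\mu_0)}{\ca(K_0)^n}=W_{p,n}(\mu_0),
\end{align*}
which is the claimed invariance.
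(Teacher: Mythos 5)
Your proposal is correct, but it takes a genuinely different route on the one point where the paper does not argue from scratch: the extremality claim \eqref{EP-invar} is imported in the paper directly from \cite{PehSte00}*{Theorem 3}, after which the proof reduces to exactly the two computations you also perform --- the pull-back norm identity via \eqref{mu-preim} and \eqref{magic}, and the capacity relation $\ca(K)^N=\ca(K_0)/|\tau|$ from \cite{Ran95}*{Theorem 5.2.5}. What you add is a self-contained proof of extremality via the fiber-averaging operator $E$: positivity of the weights from \eqref{TR-assumpt} together with \eqref{magic} makes $E$ an $L_p(\mu)$-contraction for $p\ge1$, and the residue-at-infinity computation (sum of residues of $\cc R(x)x^m/(\cc T(x)-z)$ vanishes, giving a polynomial in $z$ of degree at most $\lfloor m/N\rfloor$, first for $z$ off the critical values and then everywhere by continuity) shows $E$ maps monic degree-$nN$ polynomials to $\tau^{-n}Q\circ\cc T$ with $Q$ monic of degree $n$; this is sound and in effect reconstructs the Peherstorfer--Steinbauer argument. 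The trade-off is clear: the paper's proof is two lines modulo the citation, while yours is longer but self-contained, makes visible exactly where the hypothesis $p\ge1$ enters (Jensen/convexity for the contraction property), and yields the sharper statement $t_{p,nN}(\mu)=|\tau|^{-n}t_{p,n}(\mu_0)$ as an identity of extremal quantities rather than only the extremality of the particular polynomials $S_{nN}$. Two cosmetic points: the degree of your symmetric function in $z$ should be stated as ``at most $\lfloor m/N\rfloor$'' (cancellation can lower it), and at critical values of $\cc T$ the fiber sum must be read with multiplicities, which your continuity remark already covers since all final objects are polynomials.
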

\begin{proof}
The identity \eqref{EP-invar} follows from \cite[Theorem~3]{PehSte00}. Then, by \eqref{mu-preim} and \eqref{magic},
\begin{align}
\|S_{nN}\|_{L_p(\mu)}^p &= |\tau|^{-np} \int |T_n(\cc T(z))|^p\, d\mu(z) \no
\\
%&= |\tau|^{-np} \sum_{j=1}^N \int \frac{\cc R(\cc T_j^{-1}(z))}{\cc T^\prime (\cc T_j^{-1}(z))}\, \big|T_n(\cc T(\cc T_j^{-1}(z)))\big|^p\, d\mu_{{0}}(z)
%\\
&= |\tau|^{-np} \sum_{j=1}^N \int \frac{\cc R(\cc T_j^{-1}(z))}{\cc T^\prime (\cc T_j^{-1}(z))}\, |T_n(z)|^p\, d\mu_{{0}}(z) \no
\\
&= |\tau|^{-np} \int |T_n(z)|^p\, d\mu_{{0}}(z)
= |\tau|^{-np}\, \|T_n\|_{L_p(\mu_0)}^p.
\end{align}
Let $K_0=\supp(\mu_0)$, then $K=\cc T^{-1}(K_0)=\supp(\mu)$ and, by \cite[Theorem\,5.2.5]{Ran95}, we have $\ca(K)^N=\ca(K_0)/|\tau|$.
Therefore,
\begin{align}
W_{p,nN}(\mu)
= \frac{\|S_{nN}\|_{L_p(\mu)}}{\ca(K)^{nN}}
= \frac{\|T_n\|_{L_p(\mu_0)}/|\tau|^{n}}{(\ca(K_0)/|\tau|)^{n}}
= W_{p,n}(\mu_0).
\end{align}
\end{proof}

Next, we apply the above theorem to reflectionless measures on finite gap sets. A set $K\subset\bb R$ is called finite gap if $K=\bigcup_{k=1}^{\ell+1}[a_k,b_k]$ with $a_1<b_1<a_2<\dots<b_{\ell+1}$. The class of reflectionless measures on $K$ consists of absolutely continuous measures $\mu$ of the form
\begin{align}\label{refl-meas}
d\mu(x) &= \frac{\cf_K(x)}{\pi|(x-a_1)(x-b_\ell)|^{1/2}}
\prod_{k=1}^{\ell}\frac{|x-d_k|}{|(x-b_k)(x-a_{k+1})|^{1/2}}\,dx
=\prod_{k=1}^{\ell}\left|\frac{x-d_k}{x-c_k}\right| d\mu_K(x),
\end{align}
where $d_k\in[b_k,a_{k+1}]$, $k=1,\dots,\ell$, are arbitrary points in gaps and $\{c_k\}_{k=1}^{\ell}$ are the critical points of the Green function $g_K$ on $\bb R\bs K$. The equilibrium measure $\mu_K$ is a representative of this class corresponding to the choice $d_k=c_k$, $k=1,\dots,\ell$.  If $K\subset\bb R$ is a polynomial pre-image $K=\cc T^{-1}([-1,1])$ then the critical points $\{c_k\}_{k=1}^{\ell}$ are the zeros of $\cc T'$ outside of $K$.

The following result provides a partial resolution to the open problems~1 and~2 in \cite{AlpZin20}, namely it establishes strengthened versions of the conjectured inequalities for a subsequence of Widom factors.

\begin{corollary} \label{Thm:W-refl}
Let $K\subset\bb R$ be a polynomial pre-image $K=\cc T^{-1}([-1,1])$ for some polynomial $\cc T$ of degree $N$. Then for each reflectionless measure $\mu$ on $K$ and, in particular, the equilibrium measure of $K$,
\begin{align}\label{W-refl}
\bigl[W_{p,nN}(\mu)\bigr]^p = \frac{2^p}{\sqrt\pi}\frac{\Gamma(\tfrac{p+1}2)}{\Gamma(\tfrac{p}2+1)},
\quad n\in\bb N, \; p\geq 1.
\end{align}
\end{corollary}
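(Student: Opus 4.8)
The plan is to reduce the statement to the single base case of the interval $[-1,1]$ and then transport the answer through the invariance theorem just proved. The key observation is that every reflectionless measure on $K=\cc T^{-1}([-1,1])$ is the pull-back of the equilibrium measure on $[-1,1]$ under the transformation $\UTR$ for a suitable choice of $\cc R$. Indeed, formula \eqref{refl-meas} exhibits a reflectionless measure as $\prod_{k=1}^\ell\left|\frac{x-d_k}{x-c_k}\right|d\mu_K(x)$, where the $c_k$ are the zeros of $\cc T'$ outside $K$; comparing this with the density $\frac{N\cc R}{\cc T'}$ appearing in \eqref{compo} when $\mu_0=\mu_{K_0}$ is the equilibrium measure on $K_0=[-1,1]$ (so $w\equiv1$), I would choose $\cc R$ of degree $N-1$ whose zeros are exactly the gap points $d_k$. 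Then $\frac{N\cc R}{\cc T'}$ is, up to a positive constant fixed by the leading-coefficient normalization, precisely the reflectionless density, and $\mu=\UTR(\mu_{K_0})$. The equilibrium measure $\mu_K$ itself arises from the choice $\cc R=\cc T'/N$ via \eqref{equil-preim}.

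Granting this identification, Theorem~\ref{Thm:W-invar} gives $W_{p,nN}(\mu)=W_{p,n}(\mu_{K_0})$ for $\mu_{K_0}$ the equilibrium measure on $[-1,1]$. So the entire problem collapses to computing $\bigl[W_{p,n}(\mu_{[-1,1]})\bigr]^p$, and in fact I expect this quantity to be \emph{independent of $n$} for the interval, equal to the right-hand side of \eqref{W-refl}. The equilibrium measure on $[-1,1]$ is the arcsine distribution $d\mu_{[-1,1]}(x)=\frac{dx}{\pi\sqrt{1-x^2}}$, and its capacity is $1/2$. The monic $L_p$ extremal polynomials on $[-1,1]$ against the arcsine weight are the monic Chebyshev polynomials $T_n$ of the first kind, $\tilde T_n(x)=2^{1-n}\cos(n\arccos x)$, for every $p\ge1$: writing $x=\cos\theta$ turns the arcsine measure into normalized Lebesgue measure on $[0,\pi]$, and $|\cos n\theta|^p$ integrates to a value independent of $n$, while the equioscillation/orthogonality structure makes $\tilde T_n$ extremal. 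Thus $t_{p,n}=2^{1-n}\bigl(\frac1\pi\int_0^\pi|\cos n\theta|^p\,d\theta\bigr)^{1/p}=2^{1-n}\bigl(\frac1\pi\int_0^\pi|\cos\phi|^p\,d\phi\bigr)^{1/p}$, and dividing by $\ca([-1,1])^n=2^{-n}$ gives $\bigl[W_{p,n}\bigr]^p=2^p\cdot\frac1\pi\int_0^\pi|\cos\phi|^p\,d\phi$.

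The remaining step is the explicit evaluation of the trigonometric integral, which is a standard Beta-function computation: $\frac1\pi\int_0^\pi|\cos\phi|^p\,d\phi=\frac2\pi\int_0^{\pi/2}\cos^p\phi\,d\phi=\frac1{\sqrt\pi}\frac{\Gamma(\tfrac{p+1}2)}{\Gamma(\tfrac p2+1)}$, which matches \eqref{W-refl} exactly. I would assemble these pieces in the order: (1) identify the reflectionless density with $\frac{N\cc R}{\cc T'}$ and realize $\mu=\UTR(\mu_{[-1,1]})$; (2) invoke Theorem~\ref{Thm:W-invar} to reduce to $[-1,1]$; (3) determine the monic extremal polynomials and compute $t_{p,n}$; (4) evaluate the Beta integral.

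I expect the main obstacle to lie in Step~(1) and Step~(3). For Step~(1) one must verify carefully that the positivity and leading-coefficient conditions \eqref{TR-assumpt} on $\cc R$ are met by the choice of zeros at the gap points $d_k\in[b_k,a_{k+1}]$, and that the constant multiplying the reflectionless density is correctly absorbed (since $\UTR$ scales total mass while Widom factors are normalization-invariant, this constant is harmless, but it should be checked against \eqref{magic}). For Step~(3) the genuinely nontrivial point is that the monic Chebyshev polynomials are $L_p$-extremal for \emph{all} $p\ge1$ against the arcsine weight, not merely for $p=2$ or $p=\infty$; the cleanest justification is via the change of variables $x=\cos\theta$ together with a duality or symmetry argument showing that no monic competitor can have smaller $L_p(d\theta/\pi)$ norm than $2^{1-n}\cos n\theta$, and I would expect to lean on a known extremality result for Chebyshev polynomials under $L_p$ norms with respect to the equilibrium measure here.
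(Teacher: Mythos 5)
Your reduction is essentially the paper's: realize the reflectionless measure as $\UTR(\mu_{[-1,1]})$ for a suitable $\cc R$ and then apply the invariance Theorem~\ref{Thm:W-invar}. Two remarks on the details. First, the correct choice is $\cc R=\frac1N\cc T'\prod_{k=1}^{\ell}\frac{x-d_k}{x-c_k}$, i.e.\ $\cc R$ retains all zeros of $\cc T'$ lying in $K$ and only replaces the $\ell$ gap critical points $c_k$ by the $d_k$; its zeros are not ``exactly the gap points $d_k$'' unless $N-1=\ell$. With this choice the leading coefficients match, $N\cc R/\cc T'$ equals the reflectionless density $\prod_k|\tfrac{x-d_k}{x-c_k}|$ on $K$ exactly (no constant to absorb; total mass is preserved by \eqref{magic}), and \eqref{compo} gives $\mu=\UTR(\mu_{[-1,1]})$. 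Second, where you rederive the base case, the paper simply cites (6.1) of \cite{AlpZin20}; your computation (monic Chebyshev polynomials are $L_p(\mu_{[-1,1]})$ extremal for all $p\ge1$, periodicity of $|\cos n\theta|^p$, Beta integral) is exactly what underlies that citation, and the extremality you leave to ``a known result'' can be verified directly from \eqref{EP-char}, since after $x=\cos\theta$ the function $|\cos n\theta|^{p-2}\cos n\theta$ has a cosine expansion supported on harmonics $\cos(mn\theta)$, all orthogonal to $\cos^k\theta$ for $k<n$.

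The one genuine gap is the point you flagged in Step (1) but did not resolve. A reflectionless measure permits $d_k\in[b_k,a_{k+1}]$, and if some $d_k$ equals a gap edge $b_k$ or $a_{k+1}$ then $d_k\in K$ and $\cc R(d_k)=0$, so condition \eqref{TR-assumpt} fails and $\mu$ is \emph{not} of the form $\UTR(\mu_{[-1,1]})$ for any admissible $\cc R$. Your argument therefore only covers the case $d_k\in(b_k,a_{k+1})$ for all $k$ (which does include the equilibrium measure, since the critical points $c_k$ lie strictly inside the gaps). The paper closes the remaining case by approximating such a $\mu$ weak-star by reflectionless measures with all $d_k$ interior to the gaps --- these all have the same support $K$ --- and invoking the continuity result Theorem~\ref{Thm:mu-conv} together with the constancy of the right-hand side of \eqref{W-refl}. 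You should append this limiting step.
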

\begin{proof}
First suppose that $\mu$ is a reflectionless measure \eqref{refl-meas} with $d_k\in(b_k,a_{k+1})$ for all $k=1,\dots,\ell$ and define the polynomial
\begin{align}
\cc R(x)=\frac{1}{N}\cc T'(x)\prod_{k=1}^{\ell}\frac{x-d_k}{x-c_k}.
\end{align}
Then \eqref{TR-assumpt} holds and $\cc R$ satisfies the assumptions stated at the beginning of this section. Let $\mu_0$ be the equilibrium measure of $K_0=[-1,1]$, that is, $d\mu_0(x)=\frac{\cf_{K_0}\,dx}{\pi\sqrt{1-x^2}}$. Then, by \eqref{compo}, we have $\mu=\UTR(\mu_0)$.

By (6.1) in \cite{AlpZin20}, $[W_{p,n}(\mu_0)]^p=$\,RHS of \eqref{W-refl} for all $n\in\bb N$. Then \eqref{W-refl} for the reflectionless measure $\mu=\UTR(\mu_0)$ follows from \eqref{W-invar}.

Finally, to get \eqref{W-refl} for a general reflectionless measure $\mu$ (i.e., with some $d_k$'s at the gap edges) we approximate $\mu$ by the special reflectionless measures considered above (i.e., with all $d_k$'s lying inside the gaps) and apply Theorem~\ref{Thm:mu-conv}.
\end{proof}

In the next result we show that in the special case of $\mu_0$ supported on the unit circle, $\cc T(z)=z^N$, and $\mu=\UT(\mu_0)$ the entire sequence of extremal polynomials in $L_p(\mu)$ and the corresponding Widom factors can be obtained.

As a preliminary we recall a characterization for extremal polynomials. Let  $\mu$ be a finite Borel measure with compact support in $\bb C$ and $p\in[1,\infty)$. Then (see e.g., p.~51 in \cite{BorErd95}) a monic polynomial $S_n$ of degree $n$ is an extremal polynomial for $L_p(\mu)$ if and only if
\begin{align}\label{EP-char}
\int z^k |S_n(z)|^{p-2}\, \overline{S_n(z)}\, d\mu(z)=0, \quad k=0,1,\ldots,n-1.
\end{align}

%Note that, non-polarity of the support of $\mu_0$ is not needed in the next theorem since $\UT(\mu_0)$ can be defined by \eqref{mu-preim} even if the support is polar.

\begin{theorem}\label{Thm:circle}
Let $\mathcal{T}(z)=z^N$ with $N\geq 2$, $p\in [1,\infty)$, $\mu_0$ be a finite Borel measure  supported on the unit circle, and $\{T_n\}_{n=0}^\infty$ be monic extremal polynomials in $L_p(\mu_0)$ with $\deg(T_n)=n$. Then
\begin{align}\label{EP-circ}
S_{\ell+nN}(z)= z^\ell\, T_n(z^N), \quad \ell\in\{0,\ldots,N-1\}, \; n\in\bb N_0,
\end{align}
are monic extremal polynomials in $L_p(\mu)$ for $\mu=\UT(\mu_0)$
and the corresponding Widom factors satisfy
\begin{align}\label{W-circ}
W_{p,\ell+nN}(\mu)= \ca(K)^{-\ell}\, W_{p,n}(\mu_0), \quad \ell\in\{0,\ldots,N-1\}, \; n\in\bb N_0.
\end{align}
\end{theorem}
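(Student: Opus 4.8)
The plan is to verify directly, via the extremal characterization \eqref{EP-char}, that each $S_{\ell+nN}(z)=z^\ell T_n(z^N)$ is extremal in $L_p(\mu)$, and then to compute its $L_p(\mu)$ norm. The case $\ell=0$ is already contained in Theorem~\ref{Thm:W-invar} (with $\tau=1$), so the genuinely new content is the factor $z^\ell$, which shifts the degree by $\ell$ while respecting the $N$-fold symmetry of $\cc T(z)=z^N$. The decisive simplification is that $K_0=\supp(\mu_0)\subset\partial\bb D$ forces $K=\cc T^{-1}(K_0)\subset\partial\bb D$, so on $\supp(\mu)=K$ we have $|z|=1$ and $\overline z=z^{-1}$. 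Consequently $|S_{\ell+nN}(z)|=|T_n(z^N)|$ and $\overline{S_{\ell+nN}(z)}=z^{-\ell}\,\overline{T_n(z^N)}$ on $K$.

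Writing $m=\ell+nN$ and inserting these identities into \eqref{EP-char} reduces the orthogonality integral to
\begin{align}
\int z^k |S_m(z)|^{p-2}\,\overline{S_m(z)}\,d\mu(z)
= \int z^{k-\ell}\,|T_n(z^N)|^{p-2}\,\overline{T_n(z^N)}\,d\mu(z).
\end{align}
For $\cc T(z)=z^N$ one has $\cc R=\cc T'/N=z^{N-1}$, so $\cc R/\cc T'\equiv 1/N$ and the inverse branches are the $N$ roots $\{w_j\}$ of $w^N=z$. Applying the pull-back formula \eqref{mu-preim} and using $w_j^N=z$ turns the right-hand side into
\begin{align}
\frac1N\int\Big(\sum_{j=1}^N w_j^{\,k-\ell}\Big)\,|T_n(z)|^{p-2}\,\overline{T_n(z)}\,d\mu_0(z).
\end{align}
The heart of the argument is the root-of-unity sum: writing $w_j=\zeta^j z_0$ with $\zeta=e^{2\pi i/N}$ and a fixed branch $z_0$ of $z^{1/N}$, one finds $\sum_{j=1}^N w_j^{\,k-\ell}=N z^{q}$ when $k-\ell=qN$ for an integer $q$, and $0$ otherwise. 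Thus for $k\not\equiv\ell\pmod N$ the integrand vanishes pointwise, while for $k=\ell+qN$ the expression collapses to $\int z^{q}|T_n(z)|^{p-2}\overline{T_n(z)}\,d\mu_0(z)$. A short index count shows that as $k$ ranges over $\{0,\dots,m-1\}$ with $k\equiv\ell\pmod N$, the parameter $q$ ranges exactly over $\{0,\dots,n-1\}$, so this last integral vanishes by the extremality of $T_n$ in $L_p(\mu_0)$ through \eqref{EP-char}. Hence \eqref{EP-char} holds for $S_m$, establishing its extremality (the degenerate case $n=0$ is covered automatically, as then no residue class survives).

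For the Widom factor, the same $|z|=1$ reduction gives $\|S_m\|_{L_p(\mu)}^p=\int|T_n(z^N)|^p\,d\mu$, and since $|T_n(w_j^N)|^p=|T_n(z)|^p$ is branch-independent, \eqref{mu-preim} together with \eqref{magic} yields $\|S_m\|_{L_p(\mu)}^p=\|T_n\|_{L_p(\mu_0)}^p$, so $t_{p,m}(\mu)=t_{p,n}(\mu_0)$. Combining this with $\ca(K)^N=\ca(K_0)$ (from $\tau=1$ and \cite[Theorem~5.2.5]{Ran95}) gives
\begin{align}
W_{p,m}(\mu)=\frac{t_{p,n}(\mu_0)}{\ca(K)^{\ell}\,\ca(K_0)^{n}}=\ca(K)^{-\ell}\,W_{p,n}(\mu_0),
\end{align}
which is \eqref{W-circ}. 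I expect the main obstacle to be the bookkeeping in the extremality step: correctly handling the branch dependence of $z^{(k-\ell)/N}$ (single-valued precisely when $N\mid(k-\ell)$, the surviving case) and verifying the correspondence $k=\ell+qN\leftrightarrow q\in\{0,\dots,n-1\}$, so that exactly the orthogonality conditions available for $T_n$ are invoked and no condition outside $\{0,\dots,n-1\}$ is required. The analytic subtlety at $p=1$, where $|S_m|^{p-2}$ is singular at the zeros of $S_m$, is absorbed into the cited characterization \eqref{EP-char}.
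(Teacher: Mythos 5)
Your proposal is correct and follows essentially the same route as the paper: verify the characterization \eqref{EP-char} for $S_{\ell+nN}$ by pulling the integral back to $\mu_0$ via \eqref{mu-preim} and killing all terms with $k\not\equiv\ell\pmod N$ through the root-of-unity sum, then reduce the surviving case to the orthogonality conditions for $T_n$ with index $q=(k-\ell)/N\in\{0,\dots,n-1\}$. The only cosmetic difference is in the Widom-factor step, where the paper reduces $\ell>0$ to the $\ell=0$ case of \eqref{W-invar} via $\|S_{\ell+nN}\|_{L_p(\mu)}=\|S_{nN}\|_{L_p(\mu)}$ while you compute the norm directly; both amount to the same calculation.
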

\begin{proof}
It suffices to verify \eqref{EP-char} for the polynomials in \eqref{EP-circ}. The case $S_0\equiv 1$ is trivial. Assume $\ell+nN\geq1$, fix  $k\in\{0,\ldots,\ell+nN-1\}$, and let $\ti\ell\in\{0,\ldots,N-1\}$ and $\ti n\in\bb N_0$ be such that $k=\ti\ell+\ti nN$. Since $\mu_0$ is supported on $\pd\bb D$ so is $\mu$ by \eqref{K-preim}. Then, by \eqref{EP-circ} and \eqref{mu-preim}, we have
\begin{align}
&\int z^k\, |S_{\ell+nN}(z)|^{p-2}\, \overline{S_{\ell+nN}(z)}\, d\mu(z) =
\int z^{k-\ell}\, |T_n(z^N)|^{p-2}\, \overline{T_n(z^N)}\, d\mu{(z)} \no
\\ &\qquad=
\int z^{\ti\ell-\ell}\, \big(z^N\big)^{\ti n}\, |T_n(z^N)|^{p-2}\, \overline{T_n(z^N)}\, d\mu{(z)} \no
\\ &\qquad=
\int \frac1N\sum_{j=1}^N \big(\mathcal{T}_j^{-1}(z)\big)^{\ti\ell-\ell}\,
z^{\ti n}\, |T_n(z)|^{p-2}\,\overline{T_n(z)}\, d\mu_0(z). \label{circ1}
\end{align}
For each fixed $z\in\pd\bb D$, the $N$ pre-images $\cc T_j^{-1}(z)$ are some equispaced points on the unit circle, say $e^{i\te}e^{2\pi i j/N}$, $j=1,\ldots,N$. Then if $\ti\ell\neq\ell$, we have $0<|\ti\ell-\ell|\le N-1$ hence $e^{2\pi i(\ti\ell-\ell)/N}\neq1$ and we obtain
\begin{align}
\sum_{j=1}^N \big(\cc T_j^{-1}(z)\big)^{\ti\ell-\ell} = e^{i\te(\ti\ell-\ell)} \sum_{j=1}^N \big(e^{2\pi i(\ti\ell-\ell)/N}\big)^j = 0.
\end{align}
Thus, if $\ti\ell\neq\ell$, the RHS of \eqref{circ1} is zero. If $\ti\ell=\ell$ the RHS of \eqref{circ1} becomes
\begin{align} \label{circ2}
\int z^{\ti n}\, |T_n(z)|^{p-2}\,\overline{T_n(z)}\, d\mu_0(z)
\end{align}
and since $k=\ti\ell+\ti nN\le\ell+nN-1$ we have $\ti n\le n-1$. Then the expression in \eqref{circ2} is zero by \eqref{EP-char} since by assumption $T_n$ is an extremal polynomial in $L_p(\mu_0)$. Thus, the RHS of \eqref{circ1} is zero in either case and hence $S_{\ell+nN}(z)$ satisfies \eqref{EP-char} for all $k\in\{0,\ldots,\ell+nN-1\}$ and so is an extremal polynomial in $L_p(\mu)$.

The identity \eqref{W-circ} for $\ell=0$ is a special case of \eqref{W-invar} and for $\ell>0$ it follows from
$\|S_{\ell+nN}\|_{L_p(\mu)} = \|S_{nN}\|_{L_p(\mu)}$
which is a consequence of \eqref{EP-circ}.
\end{proof}

%%%%%%%%%%%%%%%%%%%%%%%%%%%%%%%%%%%%%%%%%%%%%%%%%%%%%%%%%%%%%%%%%%%%%%%
\section{Extremal sets} \label{Sec:Saturation}
%%%%%%%%%%%%%%%%%%%%%%%%%%%%%%%%%%%%%%%%%%%%%%%%%%%%%%%%%%%%%%%%%%%%%%%

In this section we investigate for which sets the lower bounds \eqref{LB-equil-C} and \eqref{LB-equil-R} become saturated.

For a compact set $K\subset\bb C$, the boundary $\pd\Om_K$ is called the outer boundary of $K$ and will be denoted by $O\pd(K)$. It is known \cite[Theorem~3.7.6]{Ran95} that the equilibrium measure $\mu_K$ for a set $K$ is supported on the outer boundary of $K$, that is, $\supp(\mu_K)\subset O\pd(K)$. In the next lemma we show that for regular sets $K$ the support of the equilibrium measure $\mu_K$ is equal to the outer boundary of $K$.

\begin{lemma}\label{Lem:muK-supp}
If $K\subset\bb C$ is a compact regular set then $\supp(\mu_K)=O\pd(K)$.
\end{lemma}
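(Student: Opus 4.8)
The plan is to prove the nontrivial inclusion $O\pd(K)\subseteq\supp(\mu_K)$, since the reverse inclusion $\supp(\mu_K)\subseteq O\pd(K)$ is already available from \cite[Theorem~3.7.6]{Ran95}. The main tool is to regard the right-hand side of \eqref{gK-muK} as a \emph{globally} defined function on $\bb C$ (not merely on $\Om_K$) and to exploit its harmonicity away from $\supp(\mu_K)$ together with the minimum principle for harmonic functions.

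First I would record three standard facts about $g_K$ as defined by \eqref{gK-muK}. By Frostman's theorem the logarithmic potential of the equilibrium measure satisfies $\int\log|z-\zeta|\,d\mu_K(\zeta)\ge\log\ca(K)$ for every $z\in\bb C$, so $g_K\ge0$ on all of $\bb C$. Next, since $\log\frac1{|z-\zeta|}$ is superharmonic in $z$, the potential is harmonic off $\supp(\mu_K)$, hence $g_K$ is harmonic on $\bb C\bs\supp(\mu_K)$. Finally, on the connected domain $\Om_K$ the function $g_K$ is the classical Green function with a logarithmic pole at $\infty$; being nonnegative, harmonic, and tending to $+\infty$ at $\infty$, it is strictly positive throughout $\Om_K$ by the minimum principle.

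Now suppose, for contradiction, that there is a point $z_0\in O\pd(K)=\pd\Om_K$ with $z_0\notin\supp(\mu_K)$. Choose an open disc $D$ centered at $z_0$ with $D\cap\supp(\mu_K)=\emptyset$; by the second fact $g_K$ is harmonic on $D$. Two observations pin down its behavior at the center. On one hand, since $\Om_K$ is a component of the open set $\ol{\bb C}\bs K$, its boundary satisfies $\pd\Om_K\subseteq K$, so $z_0\in K$; because $z_0\notin\supp(\mu_K)$ the function $g_K$ is continuous at $z_0$, and since $z_0$ is a regular boundary point we have $g_K(z)\to0$ as $z\to z_0$ within $\Om_K$, whence $g_K(z_0)=0$. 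On the other hand, $z_0\in\pd\Om_K$ forces $D\cap\Om_K\neq\emptyset$, and there $g_K>0$ by the third fact. Thus $g_K$ is a nonnegative harmonic function on the connected set $D$ attaining its global minimum value $0$ at the interior point $z_0$, so the minimum principle forces $g_K\equiv0$ on $D$ --- contradicting its positivity on $D\cap\Om_K$. Hence no such $z_0$ exists and $O\pd(K)\subseteq\supp(\mu_K)$, which combined with the cited inclusion yields $\supp(\mu_K)=O\pd(K)$.

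The step I expect to require the most care is the verification that $g_K(z_0)=0$: this is where the regularity hypothesis enters, and one must combine the continuity of $g_K$ at $z_0$ (available because $z_0$ lies off the support, so $g_K$ is harmonic and hence continuous on $D$) with the boundary-limit characterization of a regular point to transfer the limiting value $0$ along $\Om_K$ into the actual value $g_K(z_0)$. The remaining ingredients --- the containment $\pd\Om_K\subseteq K$, the sign and harmonicity of $g_K$, and the positivity of $g_K$ on $\Om_K$ --- are elementary once the global viewpoint on \eqref{gK-muK} is adopted.
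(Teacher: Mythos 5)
Your proposal is correct and follows essentially the same route as the paper: argue by contradiction from a point $z_0\in\pd\Om_K\bs\supp(\mu_K)$, use regularity to get $g_K(z_0)=0$, and apply the minimum principle to the nonnegative function $g_K$, harmonic off $\supp(\mu_K)$, to contradict its positivity on $\Om_K$. The only cosmetic difference is that you localize the minimum principle to a small disc around $z_0$ while the paper applies it on the whole component of $\bb C\bs\supp(\mu_K)$ containing $z_0$; both work for the same reason.
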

\begin{proof}
By the regularity assumption the Green function $g_K=0$ on $\pd\Om_K$. Suppose by contradiction that there exists $z_0\in\pd\Om_K\bs\supp(\mu_K)$. Then $g_K(z_0)=0$. Since $g_K\ge0$ and $g_K$ is harmonic on $\bb C\bs\supp(\mu_K)$ it follows from the minimum principle for harmonic functions that $g_K$ is identically zero on the component of $\bb C\bs\supp(\mu_K)$ containing $z_0$ and, in particular, on $\Om_K$, which is a contradiction. Thus, $\supp(\mu_K)=\pd\Om_K=O\pd(K)$.
\end{proof}

Now we can characterize the sets $K$ for which the lower bound $W_{p,n}(\mu_K)\ge1$ is saturated in terms of the outer boundary of $K$.

\begin{theorem} \label{Thm:LB-C-sat}
Let $K\subset\bb C$ be a compact regular set and $\mu_K$ be the equilibrium measure on $K$. Then $W_{p,n}(\mu_K)=1$ for some $p\in(0,\infty)$ and $n\in\bb N$ if and only if $O\pd(K)=Q_n^{-1}(\pd\bb D)$ for some polynomial $Q_n$ of degree $n$. In addition, in this case $W_{p,kn}(\mu_K)=1$ for all $p\in(0,\infty)$ and $k\in\bb N$.
\end{theorem}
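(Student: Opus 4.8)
My plan is to read off both implications from the equality analysis of the chain of inequalities behind the bound $[W_{p,n}(\mu_K)]^p\ge1$, and then to translate the equality conditions into potential theory. Throughout I use that $\mu_K$ is a probability measure with $\supp(\mu_K)=O\pd(K)$ (Lemma~\ref{Lem:muK-supp}) and the symmetric Green-function identity $\int\log|z-w|\,d\mu_K(z)=g_K(w)+\log\ca(K)$ from \eqref{gK-muK}. A preliminary observation, valid for any nonconstant polynomial $Q$, is that $\{|Q|>1\}$ is connected: a bounded component $W$ would carry the function $\log|Q|$, which is harmonic on $W$ (no zeros there), vanishes on $\pd W$, yet is positive inside, contradicting the maximum principle; since $\{|Q|\le1\}$ is compact, its complement then has no bounded component. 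Consequently $\{|Q|>1\}$ is the outer domain of the lemniscate $\{|Q|\le1\}$, and $\pd\{|Q|>1\}=\{|Q|=1\}=Q^{-1}(\pd\bb D)$.

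For sufficiency, suppose $O\pd(K)=Q_n^{-1}(\pd\bb D)$ with $Q_n(z)=cz^n+\cdots$. First I would record $\ca(K)^n=1/|c|$: because capacity depends only on the outer boundary and, by the preliminary observation, $O\pd(K)=Q_n^{-1}(\pd\bb D)$ is the outer boundary of $L=\{|Q_n|\le1\}$, we get $\ca(K)=\ca(L)$; comparing the Green function $g_L=\tfrac1n\log^+|Q_n|$ with $\log|z|-\log\ca(L)$ as $z\to\infty$ yields $\ca(L)^n=1/|c|$. Now the monic polynomial $P_n=Q_n/c$ satisfies $|P_n|\equiv1/|c|$ on $\supp(\mu_K)$, so $\int|P_n|^p\,d\mu_K=|c|^{-p}=\ca(K)^{np}$, giving $W_{p,n}(\mu_K)\le1$ and hence $=1$ by \eqref{LB-equil-C}. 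The same competitor handles the last assertion: if $O\pd(K)=Q_n^{-1}(\pd\bb D)$ then $|Q_n^{\,k}|=|Q_n|^k\equiv1$ on $O\pd(K)$, and $P_{kn}=Q_n^{\,k}/c^{k}$ gives $\int|P_{kn}|^p\,d\mu_K=|c|^{-kp}=\ca(K)^{knp}$, so $W_{p,kn}(\mu_K)=1$ for all $p\in(0,\infty)$ and $k\in\bb N$.

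For necessity, for any monic $P(z)=\prod_j(z-z_j)$ Jensen's inequality together with the Green-function identity gives
\[
\int|P|^p\,d\mu_K\ge\exp\Big(p\int\log|P|\,d\mu_K\Big)=\ca(K)^{np}\exp\Big(p\textstyle\sum_j g_K(z_j)\Big)\ge\ca(K)^{np}.
\]
If $W_{p,n}(\mu_K)=1$, the infimum equals $\ca(K)^{np}$. Taking a minimizing sequence $P_m$, the displayed bound forces $\sum_j g_K(z_j^{(m)})\to0$, so each zero eventually lies in the bounded set $\{g_K\le1\}$; passing to a subsequence along which the zeros converge produces an honest minimizer $P$ (this settles attainment for all $0<p<\infty$ at once). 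For this $P$ both inequalities are equalities, which forces $g_K(z_j)=0$ for every zero, i.e.\ all zeros lie in $\bb C\bs\Om_K$, and forces $|P|$ to be $\mu_K$-a.e.\ constant; by continuity and $\supp(\mu_K)=O\pd(K)$ this means $|P|\equiv a$ on $O\pd(K)$ with $a=\ca(K)^n$. Put $Q_n=P/\ca(K)^n$, so $\deg Q_n=n$, $|Q_n|\equiv1$ on $O\pd(K)$, and the zeros of $Q_n$ lie in $\bb C\bs\ol{\Om_K}$ (they cannot be on $O\pd(K)$, where $|Q_n|=1$).

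The hard part will be upgrading ``$|Q_n|=1$ on $O\pd(K)$ with zeros inside'' to the exact identity $O\pd(K)=Q_n^{-1}(\pd\bb D)$. My plan is the following. Since the zeros avoid $\ol{\Om_K}$, the function $\tfrac1n\log|Q_n|$ is harmonic on $\Om_K$, continuous up to $\pd\Om_K=O\pd(K)$ with boundary value $0$ (here regularity of $K$ makes $g_K$ vanish continuously on the boundary, and $|Q_n|=1$ there); its difference with $g_K$ is harmonic on $\Om_K$ (the logarithmic poles at $\infty$ cancel) and vanishes on the boundary, so the maximum principle gives $g_K=\tfrac1n\log|Q_n|$ on $\Om_K$. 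In particular $\Om_K\subseteq\{|Q_n|>1\}$, which is connected by the preliminary observation; since the relative boundary of $\Om_K$ inside $\{|Q_n|>1\}$ is contained in $\{|Q_n|=1\}$ and hence empty, $\Om_K$ is a nonempty clopen subset and therefore equals $\{|Q_n|>1\}$. Taking boundaries yields $O\pd(K)=\{|Q_n|=1\}=Q_n^{-1}(\pd\bb D)$. I expect the maximum-principle comparison and this connectedness bookkeeping to be the only delicate points; everything else is a routine consequence of the equality cases in Jensen's inequality and in $g_K\ge0$.
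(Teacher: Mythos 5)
Your proof is correct, and while the sufficiency direction and the "in addition" clause coincide with the paper's argument (the trial polynomials $Q_n^k/c^k$ together with $\ca(K)^n=1/|c|$), your necessity direction takes a genuinely different route after the common first step. Both you and the paper start from equality in Jensen's inequality to conclude that $|P_n|\equiv\ca(K)^n$ on $\supp(\mu_K)=O\pd(K)$ (via Lemma~\ref{Lem:muK-supp}). The paper then applies the maximum principle to get $\|P_n\|_K=\ca(K)^n$ and simply cites the Chebyshev saturation theorem \cite[Theorem~1.2]{CSZ3} to obtain the lemniscate structure. You instead extract more from the equality analysis: writing $\int\log|P_n|\,d\mu_K=n\log\ca(K)+\sum_j g_K(z_j)$ forces $g_K(z_j)=0$ at every zero, so the zeros avoid $\Om_K$; you then identify $g_K$ with $\tfrac1n\log|Q_n|$ on $\Om_K$ by a boundary comparison (this is where regularity of $K$ is used, exactly as in the paper's hypotheses) and finish with the connectedness of $\{|Q_n|>1\}$ to upgrade $\Om_K\subseteq\{|Q_n|>1\}$ to equality. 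In effect you re-prove the relevant half of \cite[Theorem~1.2]{CSZ3} rather than quoting it, which makes your argument self-contained and also yields the existence of the extremal polynomial and the location of its zeros as byproducts, at the cost of extra length; the paper's version is shorter but rests on the external reference. All the delicate points you flagged --- the maximum-principle comparison on $\Om_K\cup\{\infty\}$, the clopen argument, and $\pd\{|Q_n|>1\}=\{|Q_n|=1\}$ via the open mapping theorem --- check out.
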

\begin{proof}
Suppose the outer boundary of $K$ is a polynomial pre-image of the unit circle, that is, $O\pd(K)=\{z\in\bb C:|Q_n(z)|=1\}$ for some polynomial $Q_n(z)=c z^n+\dots$, $c\neq0$. In this case, $\ca(K)^n=\ca(O\pd(K))^n=1/|c|$. Now consider a trial monic polynomial $P_n=c^{-1}Q_n$. Since $\mu_K$ is supported on $O\pd(K)$ and $|Q_n|=1$ on $O\pd(K)$ we have
\begin{align}
\|P_n^k\|_{L_p(\mu_K)} = |c|^{-k} \|Q_n^k\|_{L_p(\mu_K)} =
|c|^{-k} = \ca(K)^{nk}.
\end{align}
Thus, $W_{p,kn}(\mu_K)\le1$ hence, by \eqref{LB-equil-C}, $W_{p,kn}(\mu_K)=1$ for all $p\in(0,\infty)$ and $k\in\bb N$.

Conversely, suppose $W_{p,n}(\mu_K)=1$ for some $p\in(0,\infty)$ and $n\in\bb N$. Let $P_n$ be a monic extremal polynomial of degree $n$ in $L_p(\mu_K)$, that is, $\|P_n\|_{L_p(\mu_K)}=\ca(K)^n$. By Jensen's inequality and Frostman's theorem, we have
\begin{align}
\|P_n\|_{L_p(\mu_K)}^p &= \exp\left[\log\left(\int |P_n|^p\, d\mu_K\right)\right]
\geq \exp\left[\int\log|P_n|^p\, d\mu_K\right]
\geq \ca(K)^{np},
\end{align}
hence the condition $\|P_n\|_{L_p(\mu_K)}=\ca(K)^n$ implies equality in Jensen's inequality,
\begin{align}
\log\int|P_n(z)|^p d\mu_K(z) = \int\log|P_n(z)|^p d\mu_K(z)
\end{align}
which holds if and only if $|P_n(z)|$ is constant $\mu_K$-a.e. By Lemma~\ref{Lem:muK-supp}, $\supp(\mu_K)=O\pd(K)$ and hence $|P_n(z)|=\ca(K)^n$ for all $z\in O\pd(K)$. Thus, by the maximum principle,
\begin{align}
\|P_n\|_K = \ca(K)^n,
\end{align}
and hence,  by \cite[Theorem~1.2]{CSZ3}, there exists a polynomial $Q_n$ of degree $n$ such that $O\pd(K)=Q_n^{-1}(\pd\bb D)$.
\end{proof}

Theorem~\ref{Thm:LB-C-sat} combined with \cite[Theorem~1.2]{CSZ3} implies the following result:

\begin{corollary}
Let $K$ be a regular compact subset of $\bb C$, $n\in \bb N$, and $p\in (0,\infty)$. Then $W_{\infty,n}(K)=1$ if and only if $W_{p,n}(\mu_K)=1$. If equalities hold then $n$-th monic extremal polynomial in $L_p(\mu_K)$ is the $n$-th monic Chebyshev polynomial on $K$.
\end{corollary}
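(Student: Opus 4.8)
The plan is to show that each of the two equalities is equivalent to one and the same geometric condition on $K$, namely that its outer boundary is a polynomial pre-image of the unit circle, and then simply chain the equivalences. On the $L_p$ side, Theorem~\ref{Thm:LB-C-sat} already establishes that, for the given $p\in(0,\infty)$ and $n\in\bb N$, the equality $W_{p,n}(\mu_K)=1$ holds if and only if $O\pd(K)=Q_n^{-1}(\pd\bb D)$ for some polynomial $Q_n$ of degree $n$. On the Chebyshev side, the equality $W_{\infty,n}(K)=1$ means precisely that the monic Chebyshev polynomial on $K$ attains the minimal possible sup-norm $\ca(K)^n$, and by \cite[Theorem~1.2]{CSZ3} this occurs if and only if $O\pd(K)=Q_n^{-1}(\pd\bb D)$ for some degree-$n$ polynomial $Q_n$. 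Composing these two characterizations through the common condition yields the asserted equivalence $W_{\infty,n}(K)=1 \iff W_{p,n}(\mu_K)=1$.

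For the identification of the extremal polynomials I would let $P_n$ be an arbitrary monic $L_p(\mu_K)$ extremal polynomial of degree $n$ and retrace the equality analysis in the proof of Theorem~\ref{Thm:LB-C-sat}. The condition $\|P_n\|_{L_p(\mu_K)}=\ca(K)^n$ forces equality in Jensen's inequality, so $|P_n|$ is constant $\mu_K$-a.e.; since $\supp(\mu_K)=O\pd(K)$ by Lemma~\ref{Lem:muK-supp}, this gives $|P_n|=\ca(K)^n$ on all of $O\pd(K)$. Passing to the polynomial-convex hull $\hat K$, whose boundary is $\pd\Om_K=O\pd(K)$, the maximum principle yields $\|P_n\|_K=\|P_n\|_{\hat K}=\ca(K)^n=t_{\infty,n}(K)$, so that $P_n$ is a monic Chebyshev polynomial on $K$. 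As the Chebyshev polynomial is unique ($p=\infty$), $P_n$ coincides with the $n$-th monic Chebyshev polynomial on $K$.

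The point I would emphasize, and the place where a little care is warranted, is that this last argument applies to \emph{every} $L_p(\mu_K)$ extremal polynomial rather than to one chosen in advance. Thus, although $L_p$ extremal polynomials need not be unique when $p\in(0,1]$, the saturation of the lower bound forces all of them to equal the single Chebyshev polynomial, so no separate uniqueness hypothesis on $p$ is required. The only genuinely delicate step is the passage from ``$|P_n|$ constant $\mu_K$-a.e.'' to ``$\|P_n\|_K=\ca(K)^n$'', which must be routed through the outer boundary rather than through $\pd K$; the resolution uses that $\mu_K$ is supported exactly on $O\pd(K)$ together with the maximum principle on $\hat K$, both of which are already available from Lemma~\ref{Lem:muK-supp} and the proof of Theorem~\ref{Thm:LB-C-sat}.
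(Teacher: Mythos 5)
Your proposal is correct and follows essentially the same route as the paper, which derives the corollary by combining Theorem~\ref{Thm:LB-C-sat} with the characterization of $W_{\infty,n}(K)=1$ from \cite[Theorem~1.2]{CSZ3} via the common condition $O\pd(K)=Q_n^{-1}(\pd\bb D)$, and identifies the extremal polynomial through the equality analysis ($|P_n|=\ca(K)^n$ on $O\pd(K)$, hence $\|P_n\|_K=\ca(K)^n$ by the maximum principle) already carried out in the proof of that theorem. Your added remark that the argument applies to every $L_p(\mu_K)$ extremal polynomial, so uniqueness for $p\le1$ is not needed, is a correct and worthwhile clarification.
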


Next, we characterize the sets $K\subset\bb R$ for which the lower bound $W_{2,n}(\mu_K)\ge\sqrt2$ is saturated.

\begin{theorem} \label{Thm:LB-R-sat}
Let $K\subset\bb R$ be a regular compact set. Then $W_{2,n}(\mu_K)=\sqrt{2}$ if and only if $K=Q_n^{-1}\big([-1,1]\big)$ for some polynomial $Q_n$ of degree $n$.
\end{theorem}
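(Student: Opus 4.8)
The plan is to prove the two implications separately. The forward direction is immediate from the inverse‑image machinery of Section~\ref{Sec:Preimages}, while the reverse direction reduces, via an equality analysis of the lower bound \eqref{LB-equil-R}, to the Chebyshev characterization \cite[Theorem~1.1]{CSZ3}. The key point throughout is that for a regular $K\subset\bb R$ one has $\Om_K=\ol{\bb C}\bs K$, so $O\pd(K)=K$ and, by Lemma~\ref{Lem:muK-supp}, $\supp(\mu_K)=K$.

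\emph{If direction.} Suppose $K=Q_n^{-1}([-1,1])$ with $\deg Q_n=n$. Apply Corollary~\ref{Thm:W-refl} with $\cc T=Q_n$ (hence $N=n$), $\mu=\mu_K$ (the equilibrium measure is reflectionless), $p=2$, and subsequence index $1$. Evaluating the right‑hand side of \eqref{W-refl} at $p=2$ gives $\frac{4}{\sqrt\pi}\,\Gamma(\tfrac32)/\Gamma(2)=\frac{4}{\sqrt\pi}\cdot\frac{\sqrt\pi}{2}=2$, so $[W_{2,n}(\mu_K)]^2=2$, i.e.\ $W_{2,n}(\mu_K)=\sqrt2$.

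\emph{Only if direction.} Let $P_n$ be the monic $L_2(\mu_K)$ extremal (orthogonal) polynomial, so that equality in \eqref{LB-equil-R} reads $\int_K P_n^2\,d\mu_K=[t_{2,n}(\mu_K)]^2=2\,\ca(K)^{2n}$. I would first record the complex Green's function on the upper half–plane $\bb H\subset\Om_K$: since $\bb H$ is simply connected, $\Phi:=\exp(g_K+i\ti g_K)$ is single valued there, with $|\Phi|>1$ in $\bb H$, $|\Phi_+|=1$ on $K$, $\Phi(z)\sim z/\ca(K)$ as $z\to\infty$, and the symmetry $\Phi(\bar z)=\ol{\Phi(z)}$ coming from $K\subset\bb R$. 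Writing $\Phi_+=e^{i\te}$ on $K$, the Cauchy–Riemann equations give $d\mu_K=\tfrac1\pi|d\te|$ on $K$ with $\te$ monotone of total variation $\pi$, while $\te$ is constant across each gap (there $\pd_y g_K=0$ by symmetry). Setting $\Psi(e^{i\phi}):=P_n(x(\phi))$, where $\Phi_+=e^{i\te}$ parametrizes the upper side of $K$ and $\Phi_-=e^{-i\te}$ the lower side, the reality of $P_n$ on $K$ makes $\Psi$ real and even on $\pd\bb D$, and expanding $P_n(z(\Phi))$ in powers of $\Phi$ shows $\Psi(e^{i\phi})=\sum_{k=-n}^{n}\ga_k e^{ik\phi}$ with $\ga_{-k}=\ga_k\in\bb R$ and $\ga_n=\ga_{-n}=\ca(K)^n$. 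Parseval then yields $\int_K P_n^2\,d\mu_K=\sum_{k=-n}^{n}\ga_k^2\ge 2\,\ca(K)^{2n}$, and the assumed equality forces $\ga_k=0$ for $|k|<n$, i.e.\ $P_n(x(\te))=2\,\ca(K)^n\cos(n\te)$. Since $\te$ sweeps an interval of length $\pi$ over $K$, $\cos(n\te)$ attains $\pm1$, so $\|P_n\|_K=2\,\ca(K)^n$. Combined with \eqref{LB-cheb-R} this gives $W_{\infty,n}(K)=2$, and \cite[Theorem~1.1]{CSZ3} then produces a degree‑$n$ polynomial $Q_n$ with $K=Q_n^{-1}([-1,1])$.

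The main obstacle is the Fourier truncation step "$|k|\le n$" in the only‑if direction: as written it relies on $\Phi$ being a \emph{global} single‑valued conformal map of $\Om_K$ onto the exterior of the unit disk, which holds only when $\Om_K$ is simply connected, i.e.\ $K$ is an interval. For a general regular $K\subset\bb R$ the domain $\Om_K$ is multiply connected, $\ti g_K$ (equivalently $\Phi$) is multivalued with nontrivial periods around the gaps, and $z(\Phi)$ is not a single‑valued Laurent series, so the clean truncation can fail. I would resolve this by running the Parseval computation on the universal cover, in the character‑automorphic Hardy space attached to $\Om_K$ (Widom/Sodin–Yuditskii theory), where $\Phi^n$ and $P_n$ become sections of matching character bundles and both the identity $\int_K P_n^2\,d\mu_K=\sum_k\ga_k^2$ and its equality case persist. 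An alternative is to approximate $K$ by finite‑gap sets and invoke Theorem~\ref{Thm:mu-conv}; however, since weak$^\ast$ continuity alone does not preserve strict inequality, extracting the rigidity "$K$ not a preimage $\Rightarrow W_{2,n}(\mu_K)^2>2$" by approximation is delicate, so I would favor the covering‑space argument for a self‑contained treatment.
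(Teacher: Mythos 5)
Your ``if'' direction is exactly the paper's (Corollary~\ref{Thm:W-refl} with $p=2$), and your endgame for the ``only if'' direction --- force $\|P_n\|_K=2\ca(K)^n$ and invoke \cite[Theorem~1.1]{CSZ3} --- is also the paper's. But the heart of the converse, the rigidity step for a general regular $K\subset\bb R$, is left open in your write-up: your Parseval argument is complete only when $\Om_K$ is simply connected (i.e.\ $K$ an interval), and you acknowledge this. The proposed repair is where the gap lies. On the universal cover $\x:\bb D\to\ol{\bb C}\bs K$ the pullback $P_n\circ\x$ is $\Gamma$-automorphic, while the Blaschke product $B$ (with $|B|=e^{-g_K\circ\x}$) is only \emph{character}-automorphic; the powers $B^k$ carry characters $\chi^k$, so an expansion $P_n\circ\x=\sum_{k=-n}^{n}\ga_kB^k$ with the symmetry $\ga_{-k}=\ga_k$ is not available in general --- it would already require $\chi^{2n}=1$, which is essentially equivalent to $K$ being a degree-$n$ polynomial preimage, i.e.\ to the conclusion. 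So the claim that ``the identity $\int P_n^2\,d\mu_K=\sum_k\ga_k^2$ and its equality case persist'' is not justified and, as stated, is circular. Your fallback (finite-gap approximation plus Theorem~\ref{Thm:mu-conv}) is correctly rejected by you for the reason you give.

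What actually closes the gap --- and is the paper's route --- is to observe that the full Fourier expansion is never needed: only the pairing against the two boundary functions $B^n$ and $\ol{B^n}$, which are orthonormal in $L^2(d\te/2\pi)$ because $B(0)=0$. Since $B^nP_n(\x(\cdot))$ extends to a bounded analytic function with value $\ca(K)^n$ at $0$, and $P_n\circ\x$ is real a.e.\ on $\pd\bb D$, one gets $2\ca(K)^n=\int P_n(\x(e^{i\te}))\bigl(B^n+\ol{B^n}\bigr)\f{d\te}{2\pi}$, and Cauchy--Schwarz (equivalently, the two-term Bessel inequality you were implicitly after) gives $\|P_n\|_{L_2(\mu_K)}\ge\sqrt2\,\ca(K)^n$ with equality iff $P_n\circ\x=\ca(K)^n(B^n+B^{-n})$; then $|P_n|\le\ca(K)^n(e^{ng_K}+e^{-ng_K})$ on $\bb C\bs K$ and regularity yield $\|P_n\|_K=2\ca(K)^n$. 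This uses only the mean-value property at $z=0$ and no character bookkeeping. Until you replace your ``character bundles match'' assertion by such an argument (or a genuine proof in the character-automorphic Hardy space), the converse implication is not proved.
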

\begin{proof}
If $K=Q_n^{-1}\big([-1,1]\big)\subset\bb R$, then $W_{2,n}(\mu_K)=\sqrt{2}$ by Corollary~\ref{Thm:W-refl}.

%For the converse direction we will follow the argument in Section~3 of \cite{AlpZin20}.
Next, suppose $K\subset\bb R$ is a regular compact set such that $W_{2,n}(\mu_K)=\sqrt{2}$.
Let $\x:\bb D\to\ol{\bb C}\bs K$ be the universal covering map of $\overline{\bb C}\bs K$ normalized by $\x(0)=\infty$ and $\lim_{z\to0}z\x(z)>0$. Then $\x$ is a meromorphic function of bounded characteristic (see Theorem~1, Section~5.1, Chapter~7 in \cite{Nev70}) and it has non-tangential boundary values a.e.\ on $\pd\bb D$ (see Theorem~2, Section~5.4, Chapter~7 in \cite{Nev70} or \cite[Theorem~V.9]{Tsuji}) and $\x(e^{i\te})\in K$ a.e.\ (see Section~5.5, Chapter~7 in \cite{Nev70}), hence $\x_{\restriction_{\pd\bb D}}\in L_\infty(d\te)$. This implies (see Section~2.4 in \cite{Fish}) that
\begin{align}
\int f d\mu_K = \int f(\x({e^{i\te}}))\, \frac{d\te}{2\pi},
\quad f\in L_1(\mu_K).
\end{align}

Let $\Gamma$ be the Fuchsian group of Mobius transformations (see Section~9.5 in \cite{Sim11}) on $\bb D$ so that $\x(z)= \x(w)$ if and only if there is a $\gamma\in \Gamma$ with $z=\gamma(w)$. Define the Blaschke product $B$ by
\begin{align}
B(z)= \prod_{\gamma\in\Gamma} \frac{|\gamma(0)|}{\gamma(0)} \gamma{(z)},
\quad z\in \bb D.
\end{align}
It is known (see e.g.\ \cite[Theorem~16.11]{Mar19} or \cite{Pom76}) that $B(z)$ is an analytic function with $|B(e^{i\te})|=1$ a.e.\ on $\pd\bb D$, simple zeros at $\x^{-1}(\infty)$, and
\begin{align}\label{grin}
|B(z)|= e^{-g_K(\x(z))}.
\end{align}
Then, in particular, we have (cf.\ (9.7.35) and (9.7.37) in \cite{Sim11})
\begin{align}\label{xBcap}
\lim_{z\to0}\x(z)B(z)=\ca(K).
\end{align}

Let $P_n$ be the $n$-th monic orthogonal polynomial for $\mu_K$. Since $K\subset \mathbb{R}$, the polynomial $P_n$ is real. The function $B(z)^n P_n(\x(z))$ has only removable singularities and therefore it can be identified with a bounded analytic function on $\bb D$ such that $\lim_{z\to0}B(z)^n P_n(\x(z))=\ca(K)^n$. Since $P_n(\x(e^{i\te}))\in\bb R$ for a.e.\ $\te$ we have, as in the proof of \cite[Theorem~3.1]{AlpZin20},
\begin{align}
2C(K)^n &= \int_0^{2\pi} P_n(\x(e^{i\te}))\big(B(e^{i\te})^n+\ol{B(e^{i\te})^n}\,\big)\, \f{d\te}{2\pi}.
\end{align}
Then, by Cauchy--Schwarz inequality,
\begin{align}\label{CS-ineq}
2C(K)^n &\le \left[\int_0^{2\pi} P_n(\x(e^{i\te}))^2\, \f{d\te}{2\pi}\right]^{\f12}
\left[\int_0^{2\pi}\big(B(e^{i\te})^n+\ol{B(e^{i\te})^n}\,\big)^{2}\, \f{d\te}{2\pi}\right]^{\f12}
\\
&= \|P_n\|_{L_2(\mu_K)} \left[\int_0^{2\pi} 2+2\Re\big(B(e^{i\te})^{2n}\big)\, \f{d\te}{2\pi}\right]^{\f1{2}}
\\
&= \|P_n\|_{L_2(\mu_K)} \Big[2+2\Re\big(B(0)^{2n}\big)\Big]^{\f1{2}}
= \sqrt{2}\, \|P_n\|_{L_2(\mu_K)},
\end{align}
hence $\|P_n\|_{L_2(\mu_K)}\ge\sqrt{2}\,\ca{(K)}^n$.
Since, by assumption, $\|P_n\|_2=\sqrt{2}\,\ca{(K)}^n$ we have equality in the Cauchy--Schwarz inequality \eqref{CS-ineq} which occurs only when the two functions are proportional hence for some constant $c$ and a.e.\ $\te$,
\begin{align}\label{CS-eq}
|P_n(\x(e^{i\te}))|^2 = c \big(B(e^{i\te})^n+\ol{B(e^{i\te})^{n}}\,\big)^{2}=c \big(B(e^{i\te})^n+{B(e^{i\te})^{-n}}\,\big)^{2}.
\end{align}
Let $F(z):=P_n(\x(z))/\big(B(z)^n+{B(z)^{-n}}\,\big)=P_n(\x(z)) B(z)^n/(B(z)^{2n}+1)$ and note that the only singularities of $F$ on $\bb D$ are removable. Since $P_n(\x(e^{i\te}))\in \bb R$ for a.e.\ $\te$ it follows from \eqref{CS-eq} that $F(e^{i\te})^2=c$ for a.e.\ $\te$ and hence $F^2$ is identically constant on $\bb D$. By \eqref{xBcap}, $\lim_{z\to0}F(z)=\ca(K)^n$ hence $F\equiv\ca(K)^n$ and so
\begin{align}\label{bz}
P_n(\x(z)) = \ca(K)^n\big(B(z)^n+B(z)^{-n}\big), \quad z\in \bb D.
\end{align}
Hence, we have, by \eqref{grin}, \eqref{bz} that
\begin{align}\label{ident}
|P_n(z)| \le \ca(K)^n\big(e^{-ng_K(z)}+e^{ng_K(z)}\big), \quad z \in \bb C\bs K.
\end{align}
Since $K$ is regular \eqref{ident} implies
$\|P_n\|_K \le 2\ca(K)^n$ which combined with \eqref{LB-cheb-R} yields
\begin{align}
\|P_n\|_K = 2\ca(K)^n.
\end{align}
Then, by \cite[Theorem~1.1]{CSZ3} or \cite[Theorem~1]{Tot11}, there exists a polynomial $Q_n$ of degree $n$ such that $K=Q_n^{-1}\big([-1,1]\big)$.
\end{proof}

Theorem~\ref{Thm:LB-R-sat} and \cite[Theorem~1.1]{CSZ3} lead to the following corollary:
\begin{corollary}
Let $K\subset\bb R$ be a regular compact set and $n\in\bb N$. Then $W_{\infty,n}(K)=2$ if and only if $[W_{2,n}(\mu_K)]^2=2$. If equalities hold then the $n$-th monic extremal polynomials in $L_\infty(K)$ and $L_2(\mu_K)$ are the same.
\end{corollary}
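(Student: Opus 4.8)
The plan is to recognize that this corollary is a dictionary between two characterizations already in hand, supplemented by one explicit identification of the extremizers. First I would settle the equivalence. By Theorem~\ref{Thm:LB-R-sat}, the condition $[W_{2,n}(\mu_K)]^2=2$ holds if and only if $K=Q_n^{-1}([-1,1])$ for some polynomial $Q_n$ of degree $n$. Independently, $W_{\infty,n}(K)=2$ means exactly $\|T_n\|_K=2\,\ca(K)^n$ for the monic degree-$n$ Chebyshev polynomial $T_n$ on $K$ (recall $W_{\infty,n}$ is measure independent), and by \cite[Theorem~1.1]{CSZ3} this too holds if and only if $K=Q_n^{-1}([-1,1])$ for some degree-$n$ polynomial $Q_n$. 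Since both statements are characterized by the identical geometric condition, they are equivalent, giving the first assertion with no further work.

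For the coincidence of extremal polynomials, suppose the equalities hold, so $K=Q_n^{-1}([-1,1])$ with $Q_n(z)=cz^n+\dots$, $c\neq0$. I would show that both extremizers equal the monic polynomial $Q_n/c$. Because $Q_n$ maps $K$ onto $[-1,1]$ we have $\|Q_n\|_K=1$, and by \cite[Theorem~5.2.5]{Ran95} together with $\ca([-1,1])=1/2$ we obtain $\ca(K)^n=1/(2|c|)$; hence the monic polynomial $Q_n/c$ has sup-norm $\|Q_n/c\|_K=1/|c|=2\,\ca(K)^n$. By \eqref{LB-cheb-R} this is the minimal attainable value, and since Chebyshev polynomials on compact sets with infinitely many points are unique, $Q_n/c$ is the $n$-th monic Chebyshev polynomial, i.e.\ the $L_\infty(K)$ extremal polynomial.

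For the $L_2(\mu_K)$ extremizer I would apply Theorem~\ref{Thm:W-invar} with $\cc T=Q_n$ (so that $N=n$ and $\tau=c$) and $\mu_0$ the equilibrium measure on $K_0=[-1,1]$. By \eqref{equil-preim} we have $\mu_K=\UT(\mu_0)$, and since $d\mu_0(x)=dx/(\pi\sqrt{1-x^2})$ is symmetric about the origin, the degree-one monic orthogonal polynomial for $\mu_0$ is $x$. Applying \eqref{EP-invar} with the index equal to $1$ then shows that the degree-$n$ monic orthogonal polynomial for $\mu_K$ is $c^{-1}Q_n(z)=Q_n/c$. Thus the $L_\infty(K)$ and $L_2(\mu_K)$ extremal polynomials coincide, both being $Q_n/c$.

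All the genuine analysis was already carried out in Theorem~\ref{Thm:LB-R-sat} and in \cite{CSZ3}, so no real obstacle remains; the only points that require care are invoking uniqueness of the Chebyshev polynomial (to conclude $Q_n/c$ \emph{is} the $L_\infty$ extremizer rather than merely one extremizer) and keeping the degrees and leading coefficients straight in the pre-image transformation. An alternative route for the $L_2$ part would bypass Theorem~\ref{Thm:W-invar} entirely and instead read off the extremizer from the representation \eqref{bz} derived inside the proof of Theorem~\ref{Thm:LB-R-sat}, but the transformation argument is the cleaner of the two.
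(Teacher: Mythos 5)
Your proof is correct, and the equivalence part is exactly the paper's (implicit) argument: both $W_{\infty,n}(K)=2$ and $[W_{2,n}(\mu_K)]^2=2$ are characterized by the same geometric condition $K=Q_n^{-1}([-1,1])$, via \cite[Theorem~1.1]{CSZ3} and Theorem~\ref{Thm:LB-R-sat} respectively. For the final claim you take a slightly more constructive route than the paper needs: you exhibit both extremizers explicitly as $Q_n/c$, using the sup-norm computation on the pre-image for the $L_\infty$ side and Theorem~\ref{Thm:W-invar} with $\cc T=Q_n$, $T_1(x)=x$ for the $L_2$ side. The paper's intended argument is shorter: the proof of Theorem~\ref{Thm:LB-R-sat} already establishes that the $L_2(\mu_K)$ orthogonal polynomial $P_n$ satisfies $\|P_n\|_K=2\ca(K)^n=t_{\infty,n}(K)$, so uniqueness of the Chebyshev polynomial immediately identifies $P_n$ with the $L_\infty$ extremizer, with no need to compute either one. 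Both routes are valid; yours has the minor benefit of producing the common extremizer in closed form, at the cost of re-invoking the pre-image machinery and keeping track of the leading coefficient $c$ (and note your symbol $T_n$ is doing double duty for the Chebyshev polynomial on $K$ and the classical Chebyshev polynomial on $[-1,1]$, which you should disambiguate).
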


%%%%%%%%%%%%%%%%%%%%%%%%%%%%%%%%%%%%%%%%%%%%%%%%%%%%%%%%%%%%%%%%%%%%%
\section{Widom factors for the equilibrium measure on a circular arc}
\label{Sec:Arc}
%%%%%%%%%%%%%%%%%%%%%%%%%%%%%%%%%%%%%%%%%%%%%%%%%%%%%%%%%%%%%%%%%%%%%
Let $\mu$ be a unit Borel measure such that $\mathrm{supp}(\mu)\subset \partial \bb D$ and $\mathrm{supp}(\mu)$ contains infinitely many points. Then for each $n\in \bb N_0$, the $n$-th Verblunsky coefficient $\al_n$ (or $\al_n(\mu)$) is defined by (see \cite[Eq.~(1.5.20)]{Sim05}) $\al_n= -\overline{\Phi_{n+1}(0)}$ where $\Phi_{n+1}$ is the monic orthogonal polynomial for $\mu$ of degree $n+1$.

In this section we consider orthogonal polynomials on the circular arc
\begin{align}
K_\ga = \big\{e^{i\te}:\te\in[\pi-\ga,\pi+\ga]\big\}, \quad 0<\ga<\pi.
\end{align}
In the case of the uniform measure $d\mu_\ga=\cf_{K_\ga}(e^{i\te})\f{d\te}{2\ga}$ on $K_\ga$, the orthogonal polynomials have been investigated in \cite{Mag06} in connection with the Gr\"unbaum--Delsarte--Janssen--Vries problem. One of the main results of \cite{Mag06}, stated in terms of the Widom factors, asserts that the sequence $\{W_{2,n}(\mu_\ga)\}_{n=1}^\infty$ is strictly monotone increasing. In addition, it is shown in \cite{Mag06} that the Verblunsky coefficients are negative and their absolute values are decreasing with $\ga$.
Below we extend these results to the equilibrium measure on $K_\ga$.

Recall that (e.g., \cite[Eq.~(9.7.13)]{Sim05} or \cite[Section~3]{NT13}) the equilibrium measure $\mu_{K_\ga}$ is given by $d\mu_{K_\ga}(e^{i\te}) = \cf_{K_\ga}(e^{i\te})w(\te)\f{d\te}{2\pi}$ where
\begin{align}\label{Arc-w}
w(\te) = \f{\sin(\te/2)}{\sqrt{\cos^2((\pi-\ga)/2)-\cos^2(\te/2)}} = \f{\sin(\te/2)}{\sqrt{\sin^2(\ga/2)-\cos^2(\te/2)}}.
\end{align}

\begin{theorem} \label{Thm:Arc}
    Let $0<\ga<\pi$ and  $0<\ga_1<\ga_2<\pi$.
\begin{enumerate}[\rm (i)]
\item The sequence $\big\{W_{2,n}(\mu_{K_\ga})\big\}_{n=1}^\infty$ is strictly monotone increasing with $n$,
    \begin{align} \label{Arc-lim}
    \lim_{n\to\infty} \big[W_{2,n}(\mu_{K_\ga})\big]^2 = 1+\cos(\ga/2),
    \end{align}
    and
    \begin{align} \label{Arc-inf-sup}
    \inf_{n\in\bb N} \big[W_{2,n}(\mu_{K_\ga})\big]^2 = 1+\cos^2(\ga/2),
    \quad
    \sup_{n\in\bb N} \big[W_{2,n}(\mu_{K_\ga})\big]^2 = 1+\cos(\ga/2).
    \end{align}
\item The Verblunsky coefficients $\al_n(\mu_{K_\ga})$ are negative and their absolute values are strictly monotone decreasing with $\ga$,
    \begin{align} \label{Arc-al-ga-mon}
    |\al_n(\mu_{K_{\ga_1}})|>|\al_n(\mu_{K_{\ga_2}})|, \quad n\in\bb N_0.
    \end{align}
\item The Widom factors $W_{2,n}(\mu_{K_\ga})$ are strictly monotone decreasing with $\ga$,
    \begin{align} \label{Arc-W-ga-mon}
    W_{2,n}(\mu_{K_{\ga_1}})>W_{2,n}(\mu_{K_{\ga_2}}), \quad n\in \bb N.
    \end{align}
\end{enumerate}
\end{theorem}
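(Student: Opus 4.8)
The plan is to reduce all three parts to the Verblunsky coefficients $\al_n(\mu_{K_\ga})$ and then to compute these in closed form. First I would record the two standard ingredients: the logarithmic capacity of the arc is $\ca(K_\ga)=\sin(\ga/2)$, and for any probability measure on $\pd\bb D$ the monic orthogonal polynomials satisfy $\|\Phi_n\|_{L_2(\mu)}^2=\prod_{j=0}^{n-1}\big(1-|\al_j(\mu)|^2\big)$ (see \cite{Sim05}). Combined with \eqref{W-def} these give
\begin{align}
\big[W_{2,n}(\mu_{K_\ga})\big]^2=\frac{1}{\sin^{2n}(\ga/2)}\prod_{j=0}^{n-1}\big(1-\al_j(\mu_{K_\ga})^2\big),
\end{align}
so that the ratio of consecutive factors is $\big[W_{2,n+1}\big]^2/\big[W_{2,n}\big]^2=(1-\al_n^2)/\sin^2(\ga/2)$; thus everything hinges on understanding $\al_n(\mu_{K_\ga})$ as a function of $n$ and of $\ga$.

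The conceptual heart is the explicit determination of the $\al_n$. Since $K_\ga$ is invariant under complex conjugation, $\mu_{K_\ga}$ is conjugation symmetric and its Verblunsky coefficients are real. I would apply the Szeg\H{o} mapping $e^{i\te}\mapsto 2\cos\te=4\cos^2(\te/2)-2$: using the density \eqref{Arc-w}, a direct computation shows that $\mu_{K_\ga}$ pushes forward to $\f{dx}{\pi\sqrt{(x+2)(-2\cos\ga-x)}}$ on $[-2,-2\cos\ga]$, i.e.\ to the equilibrium measure of that interval. The equilibrium measure of an interval has constant Jacobi parameters away from the first index, here $b_k\equiv-(1+\cos\ga)$ for $k\ge1$, $a_1^2=\tfrac12(1-\cos\ga)^2$, and $a_k^2=\tfrac14(1-\cos\ga)^2$ for $k\ge2$. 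Feeding these into the Geronimus relations (\cite{Sim05}) and solving the resulting recursion — the auxiliary denominators $E_n$ obey the constant–coefficient recursion $E_{n+1}=2E_n-\sin^2(\ga/2)E_{n-1}$, with characteristic roots $1\pm\cos(\ga/2)$ — yields, with $c=\cos(\ga/2)$,
\begin{align}
\al_n(\mu_{K_\ga})=-c\,\frac{(1+c)^{n+1}-(1-c)^{n+1}}{(1+c)^{n+1}+(1-c)^{n+1}}\quad\text{and}\quad \big[W_{2,n}(\mu_{K_\ga})\big]^2=\frac{(1+c)^{n+1}+(1-c)^{n+1}}{(1+c)^{n}+(1-c)^{n}}.
\end{align}

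From these formulas part (i) is immediate. Writing $\rho=(1-c)/(1+c)\in(0,1)$ one has $\al_n=-c\,(1-\rho^{n+1})/(1+\rho^{n+1})$, which is negative, increasing in absolute value, and bounded by $c$; hence $1-\al_n^2>1-c^2=\sin^2(\ga/2)$ shows $[W_{2,n}]^2$ is strictly increasing in $n$. The explicit ratio gives $\lim_n[W_{2,n}]^2=1+c$, and since the sequence increases its infimum is the $n=1$ value $1+c^2=1+\cos^2(\ga/2)$ and its supremum is the limit $1+\cos(\ga/2)$, which is \eqref{Arc-lim} and \eqref{Arc-inf-sup}. I would also note the clean identity $[W_{2,n}]^2=\sin^2(\ga/2)/(1+\al_n)$, which ties the two displayed formulas together.

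For part (ii), fix $n$ and view $c=\cos(\ga/2)$ as strictly decreasing in $\ga$ on $(0,\pi)$. Then $\rho=(1-c)/(1+c)$ is strictly increasing in $\ga$, so $\rho^{n+1}$ increases, $(1-\rho^{n+1})/(1+\rho^{n+1})$ decreases, and the prefactor $c$ decreases as well; hence $|\al_n(\mu_{K_\ga})|$ is strictly decreasing in $\ga$, giving \eqref{Arc-al-ga-mon}. Part (iii), namely \eqref{Arc-W-ga-mon}, is the most delicate: it amounts to showing that $f_n(c)=\big[(1+c)^{n+1}+(1-c)^{n+1}\big]/\big[(1+c)^{n}+(1-c)^{n}\big]$ is strictly increasing in $c\in(0,1)$ (equivalently, strictly decreasing in $\ga$). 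The naive combination of (i) and (ii) is inconclusive here, since in $[W_{2,n}]^2=(1-c^2)/(1+\al_n)$ both numerator and denominator decrease with $c$; instead I would prove $\tfrac{d}{dc}\log f_n(c)>0$ directly, i.e.\ that $g_m'(c)/g_m(c)$ is strictly increasing in $m$ for $g_m(c)=(1+c)^m+(1-c)^m$, which reduces to an elementary inequality in $u=1+c$ and $v=1-c$. I expect this final $\ga$-monotonicity to be the main technical obstacle, the determination of the $\al_n$ via the Szeg\H{o} map and Geronimus relations being the key structural input.
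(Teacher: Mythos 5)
Your proposal is correct, and while it rests on the same structural pillars as the paper's proof (the Szeg\H{o} map sending $\mu_{K_\ga}$ to the equilibrium measure of $[-2,-2\cos\ga]$, the Geronimus relations, and the product formula $[W_{2,n}]^2=\ca(K_\ga)^{-2n}\prod_{j<n}(1-|\al_j|^2)$), it pushes the computation further to a single closed form, which changes the character of the argument. The paper obtains, via the \emph{inverse} Geronimus relations, parity-split expressions for $\al_{2k}$ and $\al_{2k+1}$ in terms of ratios $T_{k+1}(s)/T_k(s)$ with $s=2/\sin^2(\ga/2)-1$; substituting $s=\cosh\Theta$ with $e^{\Theta}=(1+c)/(1-c)$ and $\tanh(\Theta/2)=c$, where $c=\cos(\ga/2)$, shows both parities collapse to exactly your $\al_n=-c\,\f{(1+c)^{n+1}-(1-c)^{n+1}}{(1+c)^{n+1}+(1-c)^{n+1}}$, and the telescoping identity $1-\al_j^2=(1-c^2)\,g_jg_{j+2}/g_{j+1}^2$ with $g_m=(1+c)^m+(1-c)^m$ gives your $[W_{2,n}]^2=g_{n+1}/g_n$; so your formulas are right. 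Your deductions from them are all sound: $g_ng_{n+2}-g_{n+1}^2=4c^2(1-c^2)^n>0$ gives monotonicity in $n$; the limit $1+c$ falls out directly, so you avoid Widom's asymptotic theorem \cite[Theorem~12.3]{Wid69}, which the paper needs for \eqref{Arc-lim}; part (ii) is an immediate monotonicity check in $\rho=(1-c)/(1+c)$; and for part (iii) your reduction to $(m+1)h_mg_m>mh_{m-1}g_{m+1}$ with $h_m=(1+c)^m-(1-c)^m$ simplifies to $(u^{2m}-v^{2m})+m(uv)^{m-1}(u^2-v^2)>0$, which is clear. What your route buys is uniformity and self-containedness (no parity split, no Szeg\H{o}--Widom asymptotics); what it costs is that the one step you only sketch --- extracting the closed form for $\al_n$ by ``solving the resulting recursion'' from the constant Jacobi parameters --- is precisely the computation that must be written out, and in a final version you should either carry out that linearization explicitly or, more economically, verify your closed form against the inverse Geronimus relations \cite[Theorem~13.1.10]{Sim05} as the paper does.
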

\begin{proof}
(i) It is a result of Widom \cite[Theorem~12.3 and~6.2]{Wid69} that the sequence $W_{2,n}(\mu_{K_\ga})$ has a limit and
\begin{align} \label{Widom-lim}
\lim_{n\to\infty} \big[W_{2,n}(\mu_{K_\ga})\big]^2 = 2\pi R(\infty)\ca(K_\ga),
\end{align}
where $R(z)$ is the unique non-vanishing analytic function on $\ol{\bb C}\bs K_\ga$ with $R(\infty)>0$ and boundary values satisfying $|R(e^{i\te})|=\f1{2\pi}w(\te)$ on $K_\ga$.
Consider the function
\begin{align}
F(z) = \f{z-1}{\sqrt{(z+1)^2-4\sin^2(\ga/2)z}}
\end{align}
with the branch of the square root chosen such that $F(\infty)=1$. Then
\begin{align}
\big|F(e^{i\te})\big| = \left| \f{e^{i\te/2}-e^{-i\te/2}}{\sqrt{(e^{i\te/2}+e^{-i\te/2})^2-4\sin^2(\ga/2)}}
\right|
= w(\te)
\end{align}
and $F$ is analytic on $\ol{\bb C}\bs K_\ga$ with a single simple zero at $z=1$. To remove this zero, consider the function
\begin{align}
B(z) = \f{\cos(\ga/2)}{\sin(\ga/2)} \left( \sqrt{\left(\f{z+1}{z-1}\right)^2+\left(\f{\sin(\ga/2)}{\cos(\ga/2)}\right)^2} - \f{z+1}{z-1}\right)
\end{align}
with the branch of the square root chosen such that $B$ has a zero at $z=1$. Then $B(z)$ has no other zeros, is analytic on $\ol{\bb C}\bs K_\ga$, satisfies $|B|=1$ on $K_\ga$, and
\begin{align}
B(\infty) = \f{1-\cos(\ga/2)}{\sin(\ga/2)}
= \f{\sin(\ga/2)}{1+\cos(\ga/2)}.
%= \f{\sin(\ga/2)}{2\cos^2(\ga/4)}.
\end{align}
It follows that $R(z)=\f{1}{2\pi}F(z)/B(z)$ and hence
\begin{align} \label{Arc-Rinfty}
R(\infty) = \f{1}{2\pi} \f{1+\cos(\ga/2)}{\sin(\ga/2)}.
\end{align}
In addition, by \cite[Table~5.1]{Ran95},
\begin{align} \label{Arc-cap}
\ca(K_\ga)=\sin(\ga/2).
\end{align}
Thus, \eqref{Arc-lim} follows from \eqref{Widom-lim}, \eqref{Arc-Rinfty}, and \eqref{Arc-cap}.

Next, we show that the sequence $\{W_{2,n}(\mu_{K_\ga})\}_{n=1}^\infty$ is strictly monotone increasing. We start by expressing the Widom factors in terms of the Verblunsky coefficients $\{\al_k\}_{k=0}^\infty$ of the measure of orthogonality $\mu_{K_\ga}$,
\begin{align}\label{W-al}
W_{2,n}(\mu_{K_\ga}) = \ca(K_\ga)^{-n}\prod_{j=0}^{n-1}\sqrt{1-|\al_j|^2}.
\end{align}
Since, by \eqref{Widom-lim}, $W_{2,n+1}(\mu_{K_\ga})/W_{2,n}(\mu_{K_\ga})\to1$ it follows from \eqref{W-al} and \eqref{Arc-cap} that
\begin{align}\label{al-lim}
\lim_{n\to\infty}|\al_n| = \cos(\ga/2).
\end{align}
In addition, by \eqref{W-al}, $W_{2,n+1}(\mu_{K_\ga})>W_{2,n}(\mu_{K_\ga})$ is equivalent to
\begin{equation} \label{al-ineq}
|\al_n|<\cos(\ga/2).
\end{equation}
To show this inequality we will use the Szeg\H{o} mapping and the inverse Geronimus relations \cite[Section~13.1]{Sim05}. Let $\mu=Sz(\mu_{K_\ga})$, that is, $d\mu(x)=f(x)dx$, where $f(x)$ satisfies $w(\te)=\pi\sqrt{4-x^2}f(x)$ with $x=2\cos\te$. Using $\cos^2(\te/2)=\f14(2+x)$ and $\sin^2(\te/2)=\f14(2-x)$ we get from \eqref{Arc-w} that
\begin{align}
w(\te) = \f{\sqrt{2-x}}{\sqrt{-2\cos(\ga)-x}} = \f{\sqrt{4-x^2}}{\sqrt{(-2\cos(\ga)-x)(2+x)}}.
\end{align}
Setting $c:=-2\cos(\ga)$ and $L_\ga:=[-2,c]$ then yields
\begin{align}
d\mu(x) = \f{1}{\pi}\f{\cf_{L_\ga}(x)}{\sqrt{(c-x)(2+x)}}\,dx
\end{align}
hence $\mu=\mu_{L_\ga}$, the equilibrium measure of the interval $L_\ga$. The monic orthogonal polynomials $P_n$ in $L_2(\mu_{L_\ga})$ are just the classical Chebyshev polynomials of the first kind, $T_n\big(\tfrac12(z+z^{-1})\big)=\tfrac12(z^n+z^{-n})$, appropriately shifted and rescaled,
\begin{align}\label{PnTn}
P_n(x)= 2a^n\,T_n\left(\f{x-b}{2a}\right), \quad n\in\bb N,
\end{align}
where $a=(c+2)/4=\sin^2(\ga/2)$ and $b=(c-2)/2$. For convenience of notation we set $P_0(x):=2$ which is compatible with \eqref{PnTn} for $n=0$ since $T_0(x)\equiv1$.
Then, by the inverse Geronimus relations \cite[Theorem~13.1.10]{Sim05},
\begin{align}\label{invGer}
\al_{2k}=-\frac{u_k-v_k}{u_k+v_k}, \quad
\al_{2k+1}= 1-\tfrac12(u_{k+1}+v_{k+1}), \quad k\in\bb N_0,
\end{align}
where
\begin{align}
u_k &= \frac{P_{k+1}(2)}{P_k(2)} = a\,\frac{T_{k+1}((6-c)/(2+c))}{T_{k}((6-c)/(2+c))}>0,\label{nr1}\\
v_k &= -\frac{P_{k+1}(-2)}{P_{k}(-2)} = -a\,\frac{T_{k+1}(-1)}{T_k(-1)} = a>0,
\quad k\in\bb N_0.\label{nr2}
\end{align}
Note that particular choice of $P_0$ affects only $u_0$ and $v_0$ but does not affect the value of $\al_0$ in \eqref{invGer}. Introduce a new variable
\begin{align} \label{s-c-def}
s := \frac{6-c}{2+c} = \frac{2}{\sin^2(\ga/2)}-1.
\end{align}
Since $s>1$ we obtain from
\begin{align}%\label{chebin}
T_k(s)= \tfrac12 \left[\big(s+\sqrt{s^2-1}\,\big)^k + \big(s-\sqrt{s^2-1}\,\big)^k\right]
\end{align}
that the $u_k$'s strictly increase as $k$ increases and $u_0=2-a$.
Then since $0<a<1$ it follows from \eqref{invGer} that both subsequences $\al_{2k}$ and $\al_{2k+1}$ are negative and strictly decreasing.
This strict monotonicity combined with the limit \eqref{al-lim} then yields \eqref{al-ineq} and hence the Widom factors $W_{2,n}(\mu_{K_\ga})$ are strictly monotone increasing.

Finally, since $u_0=2-a$ and $v_0=a$ we have $\al_0=a-1=-\cos^2(\ga/2)$ hence $1-|\al_0|^2=\sin^2(\ga/2)(1+\cos^2(\ga/2))$ and so $[W_{2,1}(\mu_{K_\ga})]^2=1+\cos^2(\ga/2)$. Combined with the monotonicity and the limit of $[W_{2,n}(\mu_{K_\ga})]^2$ this yields \eqref{Arc-inf-sup}.\\

(ii) Note that $s$ increases as $\ga$ decreases. Hence it is enough to show that $|\al_n|$ strictly increases as $s$ increases. Since $\al_n$ is negative we have to show that $-\al_n$ strictly increases as $s$ increases. We consider the cases of $n$ even and odd separately.

First, let $n=2k$ for some $k\in \bb N_0$. Then by \eqref{invGer}, \eqref{nr1} and \eqref{nr2}, we have
\begin{align} \label{al-even}
-\al_n=\left(\frac{T_{k+1}(s)}{T_k(s)}-1\right)\Big/
\left(\frac{T_{k+1}(s)}{T_k(s)}+1\right).
\end{align}
By \cite[Eq.~(1.104)]{Riv90} we have
\begin{align} \label{Ratio-T-deriv}
\left(\frac{T_{k+1}(s)}{T_k(s)}\right)^\prime>0 \;\text{ for }\; s>1.
\end{align}
Since $T_{k+1}(1)/{T_k(1)}=1$ it follows from \eqref{Ratio-T-deriv} and \eqref{al-even} that $-\al_n$ strictly increases as $s$ increases.

Next, let $n=2k+1$ for some $k\in\bb N_0$. We introduce a new variable $\Theta$ by
\begin{align}
s=\cosh(\Theta).
\end{align}
Here $\Theta\in (0,\infty)$ for $s\in(1,\infty)$ and $s$ increases as $\Theta$ increases. Thus, it suffices to show that $-\al_n$ strictly increases as $\Theta$ increases.
Since (see \cite[Section~1.4]{Mas}) $T_n(s)= \cosh{(n\Theta)}$, \eqref{invGer}, \eqref{nr1} and \eqref{nr2} yield
\begin{align}
-\al_n &= \frac{1}{1+s}\left(1+\frac{T_{k+2}(s)}{T_{k+1}(s)}\right)-1 = \frac{1}{1+\cosh(\Theta)}\left(1+
\frac{\cosh((k+2)\Theta)}{\cosh((k+1)\Theta)}
\right)-1 \no
\\
&=\frac{\cosh((k+2)\Theta)-\cosh(\Theta)\cosh((k+1)\Theta)}
{(1+\cosh(\Theta))\cosh((k+1)\Theta)} \no
=\frac{\sinh{(\Theta)}}{1+\cosh(\Theta)}
\frac{\sinh{((k+1)\Theta)}}{\cosh((k+1)\Theta)}
\\
&=\tanh{(\Theta/2)}\tanh((k+1)\Theta). \label{tanh}
\end{align}
The derivative of the expression in \eqref{tanh} is
\begin{align}\label{tanh2}
\frac{\mathrm{sech}^2(\Theta/2)\tanh((k+1)\Theta)}{2} + (k+1)\tanh(\Theta/2)\mathrm{sech}^2((k+1)\Theta)>0
\end{align}
hence, by \eqref{tanh} and \eqref{tanh2}, $-\al_n$ strictly increases as $\Theta$ increases.\\

(iii) Let $0<\gamma_1<\gamma_2<\pi$ and denote by $\Phi_k$ and $P_k$ the $k$-th monic orthogonal polynomials for $\mu_{K_\ga}$ and $\mu_{L_\ga}$, respectively.
By \eqref{W-def}, the required inequality \eqref{Arc-W-ga-mon} is equivalent to
\begin{align}\label{widgam2}
\frac{[W_{2,n}(\mu_{K_{\gamma_2}})]^2}{[W_{2,n}(\mu_{K_{\gamma_1}})]^2} = \frac{\|\Phi_n\|_{L_2(\mu_{K_{\ga_2}})}^2\ca(K_{\gamma_2})^{-2n}}
{\|\Phi_n\|_{L_2(\mu_{K_{\ga_1}})}^2\ca(K_{\gamma_1})^{-2n}}<1.
\end{align}
We will verify this inequality for $n$ even and odd separately. As a preliminary note that by Theorem~\ref{Thm:LB-R-sat},
\begin{align}\label{LB-impro}
\|P_k\|_{L_2(\mu_{L_\ga})}^2 = 2\ca(L_\ga)^{2k} = 2[\sin(\gamma/2)]^{4k}.
\end{align}

First, assume that $n=2k$ for some $k\in\bb N$. By \cite[Eq.~(13.1.15)]{Sim05} we have
\begin{align} \label{PkPhi2k}
\|P_k\|_{L_2(\mu_{L_\ga})}^2=2(1-\al_{2k-1})^{-1}\|\Phi_{2k}\|_{L_2(\mu_{K_\ga})}^2.
\end{align}
Combining \eqref{PkPhi2k} with \eqref{LB-impro} and \eqref{Arc-cap} for $\gamma=\gamma_1$ and $\gamma=\gamma_2$ shows that \eqref{widgam2} is equivalent to
\begin{align}\label{widgam3}
\frac{1-\al_{2k-1}(\mu_{K_{\gamma_2}}) }{1-\al_{2k-1}(\mu_{K_{\gamma_1}}) }<1.
\end{align}
The inequality \eqref{widgam3} holds true by part (ii) and the fact that the $\al_j$'s are negative.

Next, let $n=2k+1$ for some $k\in\bb N_0$. By \cite[Eq.~(13.1.21)]{Sim05} we have
\begin{align} \label{PkPhi2k-1}
\|P_{k+1}\|_{L_2(\mu_{L_\ga})}^2=2(1+\al_{2k+1})\|\Phi_{2k+1}\|_{L_2(\mu_{K_\ga})}^2.
\end{align}
Combining \eqref{PkPhi2k-1} with \eqref{LB-impro} and \eqref{Arc-cap} for $\gamma=\gamma_1$ and $\gamma=\gamma_2$ shows that \eqref{widgam2} is equivalent to
\begin{align}\label{final}
\left(\frac{\sin^2(\ga_2/2)}{1+\al_{2k+1}(\mu_{K_{\gamma_2}})}\right)\Big/
\left(\frac{\sin^2{(\ga_1/2)} }{1+\al_{2k+1}(\mu_{K_{\gamma_1}})}\right)<1.
\end{align}
To prove \eqref{final} it suffices to show that
\begin{align}
\frac{\sin^2{(\ga/2)} }{1+\al_{2k+1}(\mu_{K_{\gamma}})}
\end{align}
strictly decreases as $\gamma$ increases. This is equivalent to showing that
\begin{align}\label{final 3}
\frac{1+\al_{2k+1}(\mu_{K_{\gamma}})}{\sin^2{(\ga/2)} }
\end{align}
is strictly decreasing when $s$, defined in \eqref{s-c-def}, is increasing. Note that
\begin{align}\label{final 4}
\sin^2{(\ga/2)}=a=2/(1+s).
\end{align}
It follows from \eqref{invGer}, \eqref{nr1}, \eqref{nr2} that
\begin{align}\label{final 5}
1+\al_{2k+1}(\mu_{K_{\gamma}}) = \frac{T_{k+1}(s)+2sT_{k+1}(s)-T_{k+2}(s)}{(1+s)T_{k+1}(s)}.
\end{align}
Combining \eqref{final 5} and \eqref{final 4} we get
\begin{align}%\label{final 6}
\frac{1+\al_{2k+1}(\mu_{K_{\gamma}})}{\sin^2{(\ga/2)} }=\frac{1}{2}+\frac{2sT_{k+1}(s)-T_{k+2}(s)}{2 T_{k+1}(s)}.
\end{align}
The three-term recurrence relation $2sT_{k+1}(s)-T_{k+2}(s)=T_{k}(s)$ then implies
\begin{align}\label{final 7}
\frac{1+\al_{2k+1}(\mu_{K_{\gamma}})}{\sin^2{(\ga/2)} }=\frac{1}{2}+\frac{T_k(s)}{2 T_{k+1}(s)}.
\end{align}
It follows from \eqref{final 7} and \eqref{Ratio-T-deriv} that
the expression in \eqref{final 3} strictly decreases as $s$ increases. This completes the proof.
\end{proof}

%%%%%%%%%%%%%%%%%%%%%%%%%%%%%%%%%%%%%%%%%%%%%%%%%%%%%%%%%%%%%%%%%%%%%

%%%%%%%%%%%%%%%%%%%%%%%%%%%%%%%%%%%%%%%%%%%%%%%%%%%%%%%%%%%%%%%%%%%%%

%%%%%%%%%%%%%%%%%%%%%%%%%%%%%%%%%%%%%%%%%%%%%%%%%%%%%%%%%%%%%%%%%%%%%
\end{document}